\theoremstyle{plain}
\newtheorem*{theorem*}{Theorem}
\newtheorem{theorem}{Theorem}[section]
\newtheorem{lemma}[theorem]{Lemma}
\newtheorem{claim}[theorem]{Claim}
\newtheorem{proposition}[theorem]{Proposition}
\newtheorem*{claim*}{Claim}
\newtheorem{conjecture}[theorem]{Conjecture}
\theoremstyle{remark}
\newtheorem*{definition}{Definition}
\def\checkmark{\tikz\fill[scale=0.4](0,.35) -- (.25,0) -- (1,.7) -- (.25,.15) -- cycle;}
\newcommand{\C}{\mathcal{C}}
\newcommand{\F}{\mathcal{F}}
\newcommand{\Si}{\Sigma^*}
\newcommand{\ZZ}{\mathbb{Z}_{2^n}}
\let\emptyset\varnothing
\let\originalleft\left
\let\originalright\right
\renewcommand{\left}{\mathopen{}\mathclose\bgroup\originalleft}
\renewcommand{\right}{\aftergroup\egroup\originalright}
\begin{document}

\title{The largest projective cube-free subsets of $\ZZ$}

\author{Jason Long}
\address{Department of Pure Mathematics and Mathematical Statistics, University of Cambridge, Wilberforce Road, Cambridge CB3\thinspace0WB, UK}
\email{jl694@cam.ac.uk}

\author{Adam Zsolt Wagner}
\address{Department of Mathematics, ETH, Z\"urich, Switzerland.}
\email{zsolt.wagner@math.ethz.ch}

\subjclass[2010]{Primary 05D05; Secondary 05D99}

\begin{abstract}

In the Boolean lattice, Sperner's, Erd\H{o}s's, Kleitman's and Samotij's theorems state that families that do not contain many chains must have a very specific layered structure. We show that if instead of $\mathbb{Z}_2^n$ we work in $\mathbb{Z}_{2^n}$, several analogous statements hold if one replaces the word \emph{$k$-chain} by \emph{projective cube of dimension $2^{k-1}$}.

We say that $B_d$ is a projective cube of dimension $d$ if there are numbers $a_1, a_2, \ldots, a_d$ such that
$$B_d = \left\{\sum_{i\in I} a_i \bigg\rvert \emptyset \neq I\subseteq [d]\right\}.$$ 

As an analog of Sperner's and Erd\H{o}s's theorems, we show that whenever $d=2^{\ell}$ is a power of two, the largest $d$-cube free set in $\ZZ$ is the union of the largest $\ell$ layers. As an analog of Kleitman's theorem, Samotij and Sudakov asked whether among subsets of $\ZZ$ of given size $M$, the sets that minimize the number of Schur triples (2-cubes) are those that are obtained by filling up the largest layers consecutively. We prove the first non-trivial case where $M=2^{n-1}+1$, and conjecture that the analog of Samotij's theorem also holds.

Several open questions and conjectures are also given.
\end{abstract}
\maketitle

\section{Introduction}\label{sec:intro}

\subsection{Theorems in $\mathbb{Z}_2^n$}

We will consider four important results in the Boolean lattice, which we identify in the usual way with the elements of $\mathbb{Z}_2^n$.

We begin with Sperner's theorem from 1928, a cornerstone result in extremal combinatorics. Here we recall that two distinct sets $A,B\subseteq [n]$ form a $2$-chain if they are comparable, i.e. if $A\subset B$ or $B\subset A$. Similarly, $k$ distinct sets form a $k$-chain if any two of them are comparable.

\begin{theorem}[\label{thm:sperner}Sperner, \cite{sperner}]
If $\F\subset\mathbb{Z}_2^n$ does not contain a $2$-chain then $|\F|$ is not larger than the largest layer.
\end{theorem}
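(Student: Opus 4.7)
The plan is to use Lubell's double-counting argument, which in fact yields the stronger LYM inequality. The idea is to enumerate the maximal chains of $\mathbb{Z}_2^n$ and count how many pass through each member of $\F$, then exploit the antichain condition.

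First, I would fix notation for a maximal chain in the Boolean lattice: a sequence $\emptyset = C_0 \subsetneq C_1 \subsetneq \cdots \subsetneq C_n = [n]$ with $|C_i| = i$. Such chains are in bijection with the $n!$ linear orders on $[n]$, so there are exactly $n!$ of them; and for any fixed $A \subseteq [n]$, the number of maximal chains passing through $A$ is $|A|!\,(n-|A|)!$, since one independently orders the elements of $A$ and the elements of $[n] \setminus A$.

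Next I would use the antichain hypothesis. Any two distinct elements of a maximal chain are comparable, so the assumption that $\F$ contains no $2$-chain forces each maximal chain to meet $\F$ in at most one element. Summing the count from the previous step over $A \in \F$ and comparing with the total number of maximal chains gives
$$\sum_{A \in \F} |A|!\,(n-|A|)! \;\leq\; n!,$$
which after dividing by $n!$ becomes the LYM inequality
$$\sum_{A \in \F} \binom{n}{|A|}^{-1} \;\leq\; 1.$$
Finally, since $\binom{n}{|A|} \leq \binom{n}{\lfloor n/2 \rfloor}$ for every $A \subseteq [n]$, replacing each denominator by this maximum yields $|\F| \leq \binom{n}{\lfloor n/2 \rfloor}$, which is exactly the size of the largest layer.

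Since this is a classical result from 1928, there is no real obstacle in the argument; the only conceptual move is the introduction of maximal chains to decouple the layer sizes from the antichain condition. An alternative I would keep in mind, useful if one wanted a more structural proof, is the symmetric chain decomposition: one partitions $\mathbb{Z}_2^n$ into $\binom{n}{\lfloor n/2 \rfloor}$ disjoint chains (via a parenthesis matching on $\{0,1\}$-strings), and then the bound is immediate because $\F$ meets each chain in at most one element. This viewpoint also has the advantage of characterising equality, namely that extremal $\F$ must be one of the middle layers, which is the shape of statement one would want to generalise to the $\ZZ$ setting developed in the rest of the paper.
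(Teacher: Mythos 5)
The paper does not prove Sperner's theorem; it is cited as a classical background result from \cite{sperner}, so there is no in-paper argument to compare against. Your Lubell/LYM double-counting proof is correct and complete: the chain count $|A|!\,(n-|A|)!$ is right, the antichain hypothesis does give at most one element of $\F$ per maximal chain, the LYM inequality follows, and bounding each $\binom{n}{|A|}$ by $\binom{n}{\lfloor n/2\rfloor}$ yields the claimed bound. Your closing remark about symmetric chain decompositions is also apt, and is in fact the closer conceptual parallel to how this paper proceeds in $\ZZ$: the extremal configurations there are unions of the largest ``layers'' $L_1,\dots,L_{k-1}$, and the uniqueness/structure information that a decomposition-style argument provides is exactly what one would want when pushing toward Conjecture~\ref{conj:samsud} and Theorem~\ref{thm:kleitmananal}.
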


The \emph{layers} of the Boolean lattice are the collections of sets that have the same size, so that the largest layer is $\binom{[n]}{\lfloor n/2 \rfloor}$ and has size $\binom{n}{\lfloor n/2 \rfloor}$. Hence, Sperner's theorem states that a $2$-chain free family has size at most $\binom{n}{\lfloor n/2\rfloor}$. 

One of the many generalisations of Sperner's theorem is due to Erd\H{o}s:

\begin{theorem}[\label{thm:erdos}Erd\H{o}s, \cite{erdos}]
Let $n\geq k\geq 2$ be integers. If $\F\subset\mathbb{Z}_2^n$ does not contain a $k$-chain then $|\F|$ is not larger than the union of the $k-1$ largest layers.
\end{theorem}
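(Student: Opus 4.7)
The plan is to use a symmetric chain decomposition (SCD) of the Boolean lattice, combined with the elementary pigeonhole fact that a $k$-chain-free family meets any chain in at most $k-1$ elements. This is precisely the idea in Sperner's original argument (which corresponds to $k=2$), with the role of the middle layer now played by the union of the $k-1$ middle layers.

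First I would invoke (or briefly rederive via the de~Bruijn--Tengbergen--Kruyswijk parenthesis-matching construction) a partition of $\mathbb{Z}_2^n$ into chains $C_1,\ldots,C_N$ with $N=\binom{n}{\lfloor n/2\rfloor}$ such that every $C_i$ is \emph{symmetric}: if $|C_i|=\ell_i$ then $C_i$ contains exactly one element from each layer $j$ satisfying $\tfrac{n-\ell_i+1}{2}\le j\le\tfrac{n+\ell_i-1}{2}$, and no element from any other layer. The existence of such a decomposition is classical and needs no adaptation for this theorem.

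The geometric content of symmetry is that each $C_i$ meets the set $L$ consisting of the $k-1$ largest (equivalently, middle $k-1$) layers in exactly $\min(\ell_i,k-1)$ elements: a short chain lies entirely inside $L$, while a long chain has its central $k-1$ elements there. Summing over $i$ yields the identity
\[
|L|=\sum_{i=1}^{N}\min(\ell_i,k-1).
\]

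To finish, observe that each $C_i$ is a chain, so the $k$-chain-free hypothesis on $\F$ gives $|\F\cap C_i|\le k-1$, and trivially $|\F\cap C_i|\le\ell_i$. Hence
\[
|\F|=\sum_{i=1}^{N}|\F\cap C_i|\le\sum_{i=1}^{N}\min(\ell_i,k-1)=|L|,
\]
which is the claimed bound. I do not expect any genuine obstacle: the only non-routine input is the existence of the SCD, and the rest is straightforward double counting. The real interest of the statement, from the perspective of this paper, lies in how it transposes — with $k$-chains replaced by projective cubes of dimension $2^{k-1}$ — to the group $\mathbb{Z}_{2^n}$, where neither symmetric chains nor the above double counting are available.
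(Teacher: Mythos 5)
The paper states Erd\H{o}s's theorem as a cited classical result and does not include a proof, so there is nothing in the text to compare against. Your symmetric-chain-decomposition argument is correct and complete: you need exactly two facts, namely that each symmetric chain $C_i$ of length $\ell_i$ meets the union $L$ of the $k-1$ middle layers in precisely $\min(\ell_i,k-1)$ elements (immediate from the definition of a symmetric chain together with the observation that $L$ is a block of $k-1$ consecutive layers containing all layers that any length-$(\le k-1)$ symmetric chain touches), and that a $k$-chain-free $\F$ satisfies $|\F\cap C_i|\le\min(\ell_i,k-1)$. Summing over the partition gives $|\F|\le\sum_i\min(\ell_i,k-1)=|L|$, as claimed. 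This is the standard textbook proof; your closing remark is also apt --- it is exactly the unavailability of a symmetric-chain-like structure in $\ZZ$ that forces the paper to develop Lemma~\ref{lem:keylemma} and the iterated-sumset machinery of Section~\ref{sec:notpower} in place of this double count.
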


Kleitman generalised Sperner's theorem in a different direction. He considered families larger than $\binom{n}{n/2}$ and asked which ones have the fewest $2$-chains. Let the layers of $\{0,1\}^n$ in decreasing order of size be denoted by $J_1,J_2,\ldots, J_{n+1}$ so that $J_1=\binom{[n]}{\lfloor n/2 \rfloor}$ and $\{J_n,J_{n+1}\}=\{\{\emptyset\},\{[n]\}\}$. Say that a family $\F\subseteq \{0,1\}^n$ is \emph{centred} if there exists an $i\in[n+1]$ such that for all $j$ with $1\leq j<i$ we have $J_j\subseteq \F$, and for all $j$ with $i<j\leq n+1$ we have $J_j\cap \F=\emptyset$.

\begin{theorem}[\label{thm:kleitman}Kleitman, \cite{kleitman}]
Let $n\geq 2$ and $M$ be integers. Amongst all families $\F\subseteq \mathbb{Z}_2^n$ of size $|\F|=M$, centred families minimise the number of $2$-chains. 
\end{theorem}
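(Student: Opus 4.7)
The plan is to induct on $n$, with the base case $n = 1$ trivial. For the inductive step, given $\F \subseteq \{0,1\}^n$ with $|\F| = M$, I would split $\F$ by membership of the element $n$: write $\F = \F_0 \sqcup \{A \cup \{n\} : A \in \F_1\}$ for subfamilies $\F_0, \F_1 \subseteq \{0,1\}^{n-1}$ with $|\F_0| = a$, $|\F_1| = b$, and $a + b = M$. Grouping $2$-chains in $\F$ by the status of their endpoints with respect to $n$ gives the clean decomposition
\[
c(\F) \;=\; c(\F_0) \;+\; c(\F_1) \;+\; X(\F_0, \F_1),
\]
where $c(\cdot)$ counts $2$-chains and $X(\F_0, \F_1) := |\{(A, B) \in \F_0 \times \F_1 : A \subseteq B\}|$ counts the ``cross'' chains coming from a set without $n$ being below a set with $n$.

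By the inductive hypothesis, once $a$ and $b$ are fixed, the first two terms are each minimised by taking $\F_0, \F_1$ to be centred in $\{0,1\}^{n-1}$. The heart of the argument is to show that $X(\F_0, \F_1)$ is simultaneously minimised by a centred pair, and moreover that the minimising pair can be chosen \emph{nested}---with the smaller contained in the larger---so that, under the identification of $\{0,1\}^n$ with two copies of $\{0,1\}^{n-1}$, the reassembled family $\F$ is centred in $\{0,1\}^n$. To handle $X$ I would fix a symmetric chain decomposition of $\{0,1\}^{n-1}$: the comparability relation between $\F_0$ and $\F_1$ can be analysed chain by chain, and a local count shows that placing the intersections $\F_0 \cap C$ and $\F_1 \cap C$ at the centre of each chain $C$ minimises the cross contribution from $C$.

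Once all three terms are controlled, the resulting lower bound on $c(\F)$ depends only on $a$ and $M - a$; optimising over the split fixes the best value of $a$. Because the layer $J_k$ of $\{0,1\}^n$ splits under the projection forgetting coordinate $n$ into a union of two layers of $\{0,1\}^{n-1}$, the optimal $(a, b)$ together with centred nested $\F_0, \F_1$ corresponds exactly to filling the layers $J_1, J_2, \ldots$ of $\{0,1\}^n$ in order, which is to say to a centred family in $\{0,1\}^n$.

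The main obstacle is the cross-term. The inductive hypothesis controls chains \emph{inside} $\F_0$ and $\F_1$ separately but says nothing about comparable pairs between them, and the three minimisations are coupled through the choice of $\F_0, \F_1$. The delicate book-keeping is to show that when $\F_0$ and $\F_1$ are forced to be centred of their respective sizes, the residual freedom in how they sit on the chains of the symmetric chain decomposition can be used to minimise $X$ without disturbing $c(\F_0)$ and $c(\F_1)$, and that the jointly optimal configuration is precisely the nested one. That combined ``compression to a nested pair'' step is where I expect the real work to lie.
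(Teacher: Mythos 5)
The paper states Kleitman's theorem as a cited result (from \cite{kleitman}) and never proves it, so there is no in-paper argument to compare against; I'll evaluate your sketch on its own merits.

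Your opening decomposition is correct: splitting $\F$ by the last coordinate into $\F_0,\F_1\subseteq\{0,1\}^{n-1}$ gives $c(\F)=c(\F_0)+c(\F_1)+X(\F_0,\F_1)$, where $X$ counts pairs $A\in\F_0$, $B\in\F_1$ with $A\subseteq B$. The trouble is everything after that. The inductive hypothesis tells you that, \emph{in isolation}, $c(\F_0)$ is minimised among $a$-element families by a centred family, and similarly for $c(\F_1)$. It does \emph{not} tell you that a minimiser of the sum $c(\F_0)+c(\F_1)+X(\F_0,\F_1)$ must have $\F_0,\F_1$ centred. A non-centred choice could in principle pay a little in $c(\F_0)+c(\F_1)$ and gain a lot in $X$, and nothing in your argument rules this out. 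You acknowledge the coupling, but the resolution you propose — ``when $\F_0$ and $\F_1$ are forced to be centred of their respective sizes, use the residual freedom to minimise $X$'' — quietly assumes the very restriction that needs justifying. To close this you would need a strengthened inductive statement (or a compression move on the whole pair $(\F_0,\F_1)$ that simultaneously does not increase any of the three terms), and that is precisely where the difficulty of Kleitman's theorem lives; it is not book-keeping.

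The symmetric-chain-decomposition treatment of $X$ also does not work as described, for two separate reasons. First, $X$ does not decompose chain by chain: if $A$ lies on chain $C_1$ and $B$ on a different chain $C_2$, it is entirely possible that $A\subseteq B$ (e.g.\ in the standard SCD of $\{0,1\}^3$, $\{1\}$ and $\{1,3\}$ lie on different chains yet are comparable), so the within-chain counts do not add up to $X$. Second, even for the within-chain contribution your local claim is backwards: on a single chain $C$, with $|\F_0\cap C|=p$ and $|\F_1\cap C|=q$, the number of within-chain cross-pairs $A\subseteq B$ is minimised by placing $\F_1\cap C$ strictly below $\F_0\cap C$ (giving zero when $p+q\le|C|$), not by centring both blocks, which in fact forces them to overlap and creates many cross-pairs. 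So the one concrete step you sketch in detail is false.

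In short, the high-level decomposition is fine, but the heart of the proof — the joint minimisation of the three coupled terms and a correct way to control the cross-term — is missing, and the SCD argument offered for the cross-term fails. Known proofs of Kleitman's theorem (Kleitman's original compression argument, and Samotij's SCD-based proof of the full $k$-chain conjecture) do substantially more than minimise the pieces separately; they either compress the whole family in one step, or prove a much more refined statement about how families can sit across an SCD. As it stands, the proposal identifies the right obstacle but does not overcome it.
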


Very recently Samotij, proving a conjecture of Kleitman, generalised Theorems~\ref{thm:erdos} and~\ref{thm:kleitman}.

\begin{theorem}[\label{thm:samotij}Samotij, \cite{samotij}]
Let $n\geq k\geq 2$ and $M$ be integers. Amongst all families $\F\subseteq \mathbb{Z}_2^n$ of size $|\F|=M$, centred families minimise the number of $k$-chains. 
\end{theorem}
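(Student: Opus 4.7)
My plan would be to attempt induction on $k$, using Kleitman's Theorem~\ref{thm:kleitman} (the case $k=2$) as the base, together with a compression argument to reduce to layered families. The natural identity to exploit is that the number of $k$-chains in $\F$ equals
$$\sum_{A\in\F} c_{k-1}(\F_{>A}),$$
where $\F_{>A} := \{B\in\F : A\subsetneq B\}$ is the part of $\F$ strictly above $A$ and $c_{k-1}(\cdot)$ counts $(k-1)$-chains. A direct induction through this identity is not clean, because the sizes $|\F_{>A}|$ depend on the global structure of $\F$, but the identity is still a useful bookkeeping device when tracking local changes.

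The heart of the argument would be a two-step compression. First, I would look for a layer-preserving shift that does not increase the number of $k$-chains and brings each $\F\cap J_i$ to an initial colex segment; identifying the right such shift will itself require thought, since standard Kruskal--Katona-type compressions are set up to control shadows rather than chain counts. Second, I would try to show that for such shifted families, any cross-layer move transferring an element from a smaller layer into an unfilled larger layer can only decrease the $k$-chain count. Iterating the second move would yield a centred family. To verify both steps, I would pass to a symmetric chain decomposition of $\mathbb{Z}_2^n$: within a single symmetric chain of length $\ell$, the $k$-chains lying entirely in it among $m$ selected elements number $\binom{m}{k}$, a convex function of $m$ which already favours concentrating mass in the longest chains (hence in the middle layers).

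The main obstacle, as I see it, is handling the $k$-chains that span several symmetric chains. For $k=2$ such cross-chain pairs can be absorbed by the re-routing arguments at the heart of Kleitman's original proof, but for $k\geq 3$ this re-routing appears to break down, and a more delicate exchange argument is needed. I would try to combine the inductive hypothesis applied inside each $\F_{>A}$ with a careful bookkeeping of how the cross-layer move affects the number of chain starts and chain extensions at every layer, perhaps via a convexity property of the profile polytope formed by the vectors $(|\F\cap J_i|)_{i=1}^{n+1}$. Once the family is sufficiently compressed, a direct comparison of profiles should identify the centred configuration as the unique minimiser, completing the induction on $k$.
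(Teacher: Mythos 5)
This statement is cited in the paper as Samotij's theorem (reference \cite{samotij}), and the paper itself offers no proof of it --- it is stated as known background, alongside Sperner's, Erd\H{o}s's, and Kleitman's theorems, before the authors turn to the $\ZZ$ analogues that are their actual subject. So there is no in-paper argument to compare your proposal against; the appropriate reference is Samotij's own paper, ``Subsets of posets minimising the number of chains.''

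As for the proposal itself, you have correctly located the central difficulty but have not resolved it, and you say so yourself. Symmetric chain decompositions give you $\binom{m}{k}$ as the count of $k$-chains \emph{within} a single symmetric chain, and convexity does favour filling the long (middle) chains --- but the overwhelming majority of $k$-chains in a dense family $\F$ do not lie on a single symmetric chain, and your sketch offers no mechanism for controlling those. The identity $\sum_{A\in\F} c_{k-1}(\F_{>A})$ is true but, as you note, unhelpfully global: shifting one element between layers changes $\F_{>A}$ for exponentially many $A$ simultaneously, so ``careful bookkeeping of chain starts and chain extensions'' is not a plan but a restatement of the problem. Likewise, a Kruskal--Katona-style colex compression within each layer controls shadows, not joint comparability across $k$ layers; there is no reason the $k$-chain count is monotone under such a compression, and for $k\geq 3$ even Kleitman's original re-routing argument for pairs has no clean analogue, which is precisely why the conjecture stood open for fifty years. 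Samotij's actual proof works in the general setting of normalized matching posets and hinges on a structural lemma that is genuinely new rather than an iteration of the $k=2$ machinery; your outline would need to supply some comparable new idea (not an appeal to the profile polytope and ``convexity'' in the abstract) before it could be called a proof strategy rather than a list of obstacles.
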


The main result of this paper is that most of these results still hold if we replace $\mathbb{Z}_2^n$ by $\mathbb{Z}_{2^n}$. In order to make sense of what the analogs of these theorems are in $\mathbb{Z}_{2^n}$ we need to find analogs of the definition of \emph{layers} and $\emph{chains}$.

\subsection{Theorems in $\mathbb{Z}_{2^n}$}

Finding a natural partition of the elements of $\ZZ$ into $n+1$ layers is not too difficult.  For all $1\leq i\leq n$, let $$L_i:=\{x\in [2^n]: x\equiv 2^{i-1} \text{ }\left(\text{mod }2^i\right)\}$$ be the $i$'th layer and define $L_{n+1}:=\{0\}$. So $L_1$ consists of all odd numbers, $L_2$ is all numbers congruent to $2$ mod $4$, etc. In particular, we have $|L_{i-1}|=2|L_{i}|$ for all $i\leq n$. Finding the right analog of a chain in $\ZZ$ is much more challenging. It was not obvious to us why a corresponding notion should exist, but it turns out it is \emph{projective cubes}.

Following the notation of~\cite{BaLaShWa}, given a multiset $S=\{a_1,\ldots,a_d\}$ of size $d$, we define the \emph{projective $d$-cube} generated by $S$ as 
$$\Si S=\left\{\sum_{i\in I} a_i : \emptyset \neq I\subseteq [k]\right\}.$$
Extremal properties of projective cubes have a vast literature, see e.g.~\cite{cubealon, cubealon2, cubeerdos,cuberodl}. In particular, Rado~\cite{cuberado}, and later independently Sanders \cite{cubesanders} and Folkman (see \cite{cubefolkman} or \cite{cubefolkman2}),
showed that, for any $r$ and $d$, there exists a least number $n$ so that, for any
partition of $[1, n]$ into $r$ classes, one class contains a projective $d$-cube. 

Throughout the rest of this paper we work in the cyclic group $\ZZ$, hence in the definition of $\Si S$ the summations are all modulo $2^n$, and $\Si S$ is a subset of $\ZZ$. Following e.g.~\cite{promel} we do not assume that the numbers $a_1,\ldots,a_k$ are distinct, but we will always view the $d$-cube $\Si S$ as a set, rather than a multiset. Hence $|\Si S|\leq 2^{k}-1$, but $|\Si S|$ could be much smaller. We say that a set $A\subset \ZZ$ is \emph{$d$-cube-free} if there does not exist a multiset $S$ of size $d$ with $\left(\Si S\right)\subseteq A$. 

{\textbf {Examples}}
\begin{itemize}
\item If $S=\{a,b\}$  then $\Si S=\{a,b,a+b\}$ is a $2$-cube (Schur triple). In the degenerate case where $a=b$ we have $\Si S=\{a,2a\}$ which is also a $2$-cube.
\item The set $\{a,2a,3a,\ldots,ka\}$ is a $k$-cube, as it is generated by $S=\{\underbrace{a,\ldots,a}_{k\text{ times}}\}$.
\item If $n=3$ then $\Si \{2,5,5\}=\{2,4,5,7\}$ and hence the set $A=\{2,3,4,5,7\}$ is not $3$-cube-free.
\item The set $\{0\}$ is a $d$-cube for any $d$.
\end{itemize}

We are now ready to state our main results. We obtain the statements by replacing $\mathbb{Z}_2^n$ by $\ZZ$ and the expression ``$k$-chain'' by ``$2^{k-1}$-dimensional projective cube'' in Theorems~\ref{thm:sperner}-\ref{thm:samotij}. We begin with the resulting analog of Sperner's theorem, which is an easy exercise.

\begin{proposition}[\label{prop:schuranal}Analog of Sperner's theorem in $\ZZ$]
If $\F\subset\ZZ$ does not contain a projective $2$-cube then $|\F|$ is not larger than the largest layer, i.e.~$L_1$.
\end{proposition}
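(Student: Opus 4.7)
The plan is to recognise the 2-cube-free condition as an ordinary sum-free condition on $\F$, and then apply the standard doubling argument.

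I would first unpack the definition of a projective 2-cube in the presence of the multiset convention. For $S=\{a,b\}$ the set $\Si S=\{a,b,a+b\}$ collapses in two degenerate cases: if $a=b\neq 0$ it becomes the pair $\{a,2a\}$, and if $a=b=0$ it is the singleton $\{0\}$. Therefore $\F$ is 2-cube-free exactly when $0\notin \F$ and, for every (not necessarily distinct) pair $a,b\in \F$, we have $a+b\notin \F$; equivalently, $\F$ is sum-free in the strong sense that $\F \cap (\F+\F)=\emptyset$.

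Assuming $\F$ is non-empty, I would fix any $a_0\in\F$ and observe that the translate $a_0+\F$ is a subset of $\F+\F$ of size $|\F|$. Combined with the disjointness $\F \cap (\F+\F)=\emptyset$, this gives
$$2|\F| \leq |\F|+|\F+\F| \leq |\ZZ| = 2^n,$$
and hence $|\F|\leq 2^{n-1}=|L_1|$, as required. Tightness is witnessed by $L_1$ itself: the sum of two odd residues is even, so $L_1$ is sum-free and contains no projective $2$-cube.

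There is essentially no obstacle here; the only point requiring care is the multiset convention, ensuring that the degenerate cases $a=b$ and $a=b=0$ are correctly translated into $2a\notin\F$ and $0\notin\F$ respectively. Once this bookkeeping is done, the classical half-the-group bound for sum-free subsets of an abelian group completes the proof.
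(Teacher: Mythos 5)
Your proof is correct, and it is exactly the argument the paper has in mind: the paper states the proposition is ``an easy exercise'' (noting that a projective $2$-cube is a Schur triple, so the claim is that sum-free subsets of $\ZZ$ have size at most $2^{n-1}$), and the translation/pigeonhole argument you give is the standard way to establish that bound, the same mechanism used in the paper's proof of the stronger Theorem~\ref{thm:stronger} when specialised to $\ell=1$.
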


Note that a projective $2$-cube is simply a Schur-triple, so Proposition~\ref{prop:schuranal} simply states that any sum-free set in $\ZZ$ has size at most $2^{n-1}$. The analog of Erd\H{o}s' theorem (Theorem~\ref{thm:erdos}) is on largest sets without projective cubes:

\begin{theorem}[\label{thm:powertwo}\label{thm:erdosanal}Analog of Erd\H{o}s' theorem in $\ZZ$]
Let $n\geq k\geq 2$ be  integers. If $\F\subset\ZZ$ does not contain a projective $2^{k-1}$-cube then $|\F|$ is not larger than the union of the $k-1$ largest layers, i.e.~$L_1\cup L_2\cup\ldots\cup L_{k-1}$.
\end{theorem}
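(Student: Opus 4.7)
My strategy is induction on $k$, with base case $k=2$ being Proposition~\ref{prop:schuranal}. For the inductive step, suppose the bound holds for $k-1$ and let $\F\subseteq\ZZ$ be $2^{k-1}$-cube-free. If $\F$ is also $2^{k-2}$-cube-free then the inductive hypothesis already gives $|\F|\leq 2^n-2^{n-k+2}$, a stronger bound than required; so we may assume $\F$ contains a projective $2^{k-2}$-cube $\Si S$ with generating multiset $S=\{a_1,\dots,a_{2^{k-2}}\}$.

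The central construction is the \emph{extension set}
\[
\F' \;:=\; \bigcap_{I\subseteq[2^{k-2}]}\bigl(\F-\sigma(I)\bigr),
\]
where $\sigma(I)=\sum_{i\in I}a_i$ (and $\sigma(\emptyset)=0$). A direct check shows that if $\F'$ contains a projective $2^{k-2}$-cube $\Si T$, then the concatenation $S\sqcup T$ generates a projective $2^{k-1}$-cube inside $\F$: every non-empty subset sum of $S\sqcup T$ decomposes as $\sigma(I)+\sigma(J)$ with $I\subseteq S$ and $J\subseteq T$, and lies in $\F$ either because $\Si S\subseteq\F$ (when $J=\emptyset$), because $\Si T\subseteq\F'\subseteq\F$ (when $I=\emptyset$), or by the defining property of $\F'$. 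Since $\F$ is $2^{k-1}$-cube-free, $\F'$ must therefore be $2^{k-2}$-cube-free, and by the inductive hypothesis $|(\F')^c|\geq 2^{n-k+2}$.

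The main obstacle will be turning this into the desired bound $|\F^c|\geq 2^{n-k+1}$. Writing $(\F')^c=\bigcup_{I}(\F^c-\sigma(I))$, a crude union bound introduces a factor of $|\Si S\cup\{0\}|\leq 2^{2^{k-2}}$, whereas the sharp factor should be only $2$; even on the extremal set $\F=\ZZ\setminus 2^{k-1}\ZZ$, explicit calculation shows that this ratio can easily reach $2^{k-1}-1$ for badly chosen $S$. To recover the correct constant I would try to select the initial generators $a_i$ from a suitably deep layer $L_m$, so that every subset sum $\sigma(I)$ is automatically confined to the subgroup $2^{m-1}\ZZ$ of size $2^{n-m+1}$; a careful interplay between $m$ and the coset structure of $\F^c$ modulo $2^{m-1}\ZZ$ should then pin down the missing factor. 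Guaranteeing the existence of such a layered $2^{k-2}$-cube inside $\F$ --- equivalently, that $\F\cap 2^{m-1}\ZZ$ is dense enough for a fresh application of the induction hypothesis via the isomorphism $2^{m-1}\ZZ\cong\Z/2^{n-m+1}\Z$ --- together with handling the complementary case in which $\F$ is sparse in every deep layer, is the main technical hurdle I expect to face in closing the induction.
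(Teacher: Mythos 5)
Your inductive doubling scheme has the right moving parts --- shifting $\F$ by the subset sums $\sigma(I)$, intersecting, and observing that the intersection must be cube-free of half the dimension --- but the gap you flag at the end is not a detail to be tidied up; it is fatal to this form of the induction, and the paper avoids it by a non-inductive route.

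The obstruction is quantitative and sharp. From $(\F')^c=\bigcup_{I}\left(\F^c-\sigma(I)\right)$ the union bound gives $|(\F')^c|\le |\Si S\cup\{0\}|\cdot|\F^c|$, so combined with $|(\F')^c|\ge 2^{n-k+2}$ you would need $|\Si S\cup\{0\}|\le 2$ to deduce $|\F^c|\ge 2^{n-k+1}$. That can never happen for $k\geq 3$: since $\{0\}$ is a $2^{k-1}$-cube, $0\notin\F$, hence no nonempty subset of $S$ sums to $0$; the partial sums $a_1,\ a_1+a_2,\ \dots,\ a_1+\cdots+a_{2^{k-2}}$ are then pairwise distinct, so $|\Si S|\ge 2^{k-2}$ and $|\Si S\cup\{0\}|\ge 2^{k-2}+1$. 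Even the most economical choice of $S$ (an arithmetic progression) therefore loses a factor of at least $\left(2^{k-2}+1\right)/2$, and the deficit grows with $k$. Restricting the generators to a deep layer $L_m$ confines the sums $\sigma(I)$ to the subgroup $2^{m-1}\ZZ$, but it does not reduce their \emph{number} below $2^{k-2}+1$, which is what the union bound actually charges you for.

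The paper instead proves the stronger Theorem 2.1 directly, without induction on $k$, and its key structural choice is what rescues the counting. Rather than starting from a half-size cube and doubling, it starts from a $\left(2^{k-1}-1\right)$-cube of arithmetic-progression form $\{x,2x,\dots,\left(2^{k-1}-1\right)x\}$, whose set of subset sums including $0$ has exactly $2^{k-1}$ elements --- precisely matching the density one is entitled to spend. Claim 2.2 produces such an $x$ by an averaging argument over blocks of consecutive layers, and it only needs density $>1-\frac{1}{2^{k-1}-1}$, which is weaker than the theorem's hypothesis. Then $\F\cap(\F-x)\cap\cdots\cap\left(\F-\left(2^{k-1}-1\right)x\right)\neq\emptyset$ follows from a lossless union bound over $2^{k-1}$ translates, since each complement has size $<2^{n-k+1}$ and $2^{k-1}\cdot 2^{n-k+1}=2^n$; any $y$ in the intersection yields $\Si\{x,\dots,x,y\}\subseteq\F$. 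If you want to salvage your approach, the moral is to hunt for the long arithmetic progression inside $\F$ directly, rather than an arbitrary sub-cube, so that the number of translates in the union bound exactly matches the available density.
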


This theorem is sharp, since $L_1\cup\ldots\cup L_{k-1}$ is $2^{k-1}$-cube-free (indeed, amongst any collection of $2^{k-1}$ numbers there is a subset whose sum is divisible by $2^{k-1}$). In order to state our version of Kleitman's theorem (Theorem~\ref{thm:kleitman}) we need to define what a \emph{centred} set is. Our definition of centred will be the exact same as in the Boolean lattice case: we say that $S\subset \ZZ$ is \emph{centred} if there exists an $i\in[n+1]$ such that for all $j$ with $1\leq j<i$ we have $L_j\subseteq S$, and for all $j$ with $i<j\leq n+1$ we have $L_j\cap S=\emptyset$. The analog of Kleitman's theorem was raised as a question by Samotij and Sudakov~\cite{samsud} in the very last line of their paper.\footnote{Compare this with Kleitman's conjecture, which was proved by Samotij (see Theorem~\ref{thm:samotij}). This conjecture appeared in the very last line of Kleitman's paper!}

\begin{conjecture}[\label{conj:samsud}Analog of Kleitman's theorem in $\ZZ$]
Let $n\geq 2$ and $M$ be integers. Amongst all families $\F\subseteq \ZZ$ of size $|\F|=M$, centred families minimise the number of $2$-cubes.
\end{conjecture}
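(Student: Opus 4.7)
The plan is to prove this by induction on $n$, using the natural parity split $\ZZ = L_1 \sqcup (L_2 \cup \cdots \cup L_{n+1})$; the subgroup of even elements is isomorphic to $\mathbb{Z}_{2^{n-1}}$ via $x \mapsto x/2$, and its layers correspond exactly to $L_2, \ldots, L_{n+1}$. Given $\F \subseteq \ZZ$, write $A = \F \cap L_1$ and $B = \F \setminus A$. The key structural observation is that the number of $2$-cubes in $\F$ decomposes by parity: using the layer identities $L_i + L_j \subseteq L_{\min(i,j)}$ (for $i \neq j$) and $L_i + L_i \subseteq L_{i+1} \cup \cdots \cup L_{n+1}$, any $2$-cube $\Si\{a,b\} \subseteq \F$ has its generator multiset in one of three categories: (i) $\{a, b\} \subseteq B$, (ii) $\{a, b\} \subseteq A$ with $a+b \in B$, or (iii) one generator in $A$ and one in $B$, in which case $a+b \in L_1 = A$ automatically. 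Writing $N_e$ for the type-(i) count and $N_o$ for the combined counts of types (ii) and (iii), we have $N(\F) = N_e(\F) + N_o(\F)$.

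The summand $N_e(\F)$ depends only on $B$; under $x \mapsto x/2$ it equals the $2$-cube count of $B/2$ in $\mathbb{Z}_{2^{n-1}}$, so by the inductive hypothesis it is minimised for fixed $|B|$ when $B$ is centred in $L_2 \cup \cdots \cup L_{n+1}$. For the mixed count $N_o(\F)$, first consider the case $A = L_1$. Two observations are key here: for every even $z$, the equation $x+y = z$ has exactly $2^{n-1}$ ordered solutions with $x, y \in L_1$ (namely $y = z-x$ as $x$ ranges over the odd residues), and for every odd $x$ and every even $y$ the sum $x+y$ is automatically in $L_1$. Summing these contributions exhibits $N_o(\F)$ as a linear function of $|B|$ plus a small layer-dependent correction controlled by $|B \cap L_2|$. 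Combined with the inductive bound on $N_e$, this shows that among families with $L_1 \subseteq \F$, the centred one is optimal (the case $M \leq 2^{n-1}$ being trivial, as the centred family then lies in $L_1$ and is $2$-cube free).

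The remaining --- and main --- obstacle is to show that for $M > 2^{n-1}$, every $\F$ with $A \subsetneq L_1$ can be transformed into a family with $A = L_1$ without increasing $N$. A natural approach is a swap $\F \mapsto (\F \setminus \{y\}) \cup \{x\}$ with $y \in B$ and $x \in L_1 \setminus A$: one would choose $y$ from the deepest occupied layer of $B$ (so that the drop $-\Delta N_e$ is maximised by the inductive structure of $B$) and $x \in L_1 \setminus A$ arbitrarily. The change $\Delta N_o$ splits into new cubes through $x$ minus those lost through $y$; both pieces are expressible in terms of the autocorrelation $r_A(t) = |A \cap (A-t)|$ and the representation function of $A$ as a sumset. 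Proving the desired inequality $\Delta N_o \leq -\Delta N_e$ is most naturally carried out by Fourier analysis on $\ZZ$: the characteristic function of each $L_j$ has Fourier support on a coset of the subgroup of index $|L_j|$, and the interplay of these supports dictates the gains and losses on both sides. The delicate point is that when $A$ is far from full, the Fourier concentration of $\mathbf{1}_A$ degrades, so the gains from adding $x$ become less predictable and sharper analysis (or a more structured choice of swap adapted to $A$) may be required. This balancing step is where we expect the bulk of the technical work to reside.
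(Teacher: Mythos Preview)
The statement you are addressing is stated in the paper as a \emph{conjecture} (attributed to Samotij and Sudakov), not as a theorem; the paper does not prove it. What the paper does prove is only the first nontrivial case $M=2^{n-1}+1$ (Theorem~\ref{thm:kleitmananal}), and the method there is quite different from yours: it is a direct elementary counting argument. For each layer index $a$, the authors bound $|C(a,a,a+)|$ and $|C(a+,a,a)|$ from below by viewing the relevant Schur triples as edges in an $|S_{a+}|$-regular directed graph on $L_a$ (Claims~\ref{claim:firstsam} and~\ref{claim:secondsam}), sum these bounds into a function $f(S)$ of the layer profile $(|S_1|,\ldots,|S_B|)$, and then optimise $f$ under the size constraint. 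No induction on $n$, no Fourier analysis, and no swap argument appear.

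Your outline is a plausible strategy for the full conjecture, but it is not a proof: you explicitly leave the central inequality --- that swapping an element of $B$ for a missing element of $L_1$ never increases the Schur-triple count --- unproved, saying this is ``where we expect the bulk of the technical work to reside''. Since the conjecture is open, that is not a minor omission; it is exactly the obstacle. Two smaller points. First, when $A=L_1$, the mixed count is exactly $N_o=3\cdot 2^{n-1}|B|$ with no layer-dependent correction: for each even $z$ there are precisely $2^{n-1}$ ordered odd pairs summing to it, and for each odd $x$ and even $y$ the sum $x+y$ is automatically in $L_1$. So your remark about a correction controlled by $|B\cap L_2|$ is incorrect (harmless for the conclusion, but it suggests the computation was not checked). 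Second, your induction on $n$ assumes the full conjecture for $\mathbb{Z}_{2^{n-1}}$ with arbitrary $|B|$, so the swap step cannot be finessed away: even to run the scheme you must establish it at every level.
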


For $M\leq 2^{n-1}$ Conjecture~\ref{conj:samsud} is trivial, our modest contribution is that the first non-trivial case of this conjecture is true. Theorem~\ref{thm:kleitmananal} states that the $M=2^{n-1}+1$ case of Conjecture~\ref{conj:samsud} is true. 
\begin{theorem}\label{thm:kleitmananal}
All sets of size $2^{n-1}+1$ in $\ZZ$ contain at least $3\cdot 2^{n-1}$ Schur triples.
\end{theorem}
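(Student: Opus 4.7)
The plan is to translate the problem into a question about the complement $B := \ZZ \setminus A$, which has the conveniently \emph{odd} cardinality $|B| = 2^{n-1} - 1$. Write $T(A)$ for the number of (ordered) Schur triples $(x,y,z) \in A^3$ with $x+y=z$, and for $z \in \ZZ$ let
$$r_A(z) := |\{(x,y) \in A \times A : x+y = z\}|, \qquad s_B(z) := |\{(b_1, b_2) \in B \times B : b_1 + b_2 = z\}|.$$
A two-line inclusion--exclusion (writing $r_A(z) = |A \cap (z-A)|$ and using $(z-A)^c = z - B$) gives the identity $r_A(z) = 2|A| - 2^n + s_B(z)$, which with $|A| = 2^{n-1}+1$ simplifies to the remarkably clean
$$r_A(z) = 2 + s_B(z)$$
for every $z$. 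Summing over $z \in A$ and rearranging yields
$$T(A) = 2|A| + \sum_{z \in A} s_B(z) = 2^n + 2 + \sum_{b_1 \in B} d(b_1),$$
where $d(b_1) := |\{b_2 \in B : b_1 + b_2 \notin B\}|$. So the target $T(A) \geq 3 \cdot 2^{n-1}$ reduces to establishing
$$\sum_{b_1 \in B} d(b_1) \geq 2^{n-1} - 2 = |B| - 1.$$

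The crux is an elementary group-theoretic observation: $d(b_1) = 0$ can hold for \emph{at most one} $b_1 \in B$. Indeed, $d(b_1) = 0$ says $b_1 + B \subseteq B$, and by equality of cardinalities $b_1 + B = B$. Hence $b_1$ lies in the stabiliser $H := \{h \in \ZZ : h + B = B\}$, which is a subgroup of $\ZZ$. Since $B$ is a union of $H$-cosets, $|H|$ divides $|B|$. But every subgroup of $\ZZ$ has order a power of $2$, while $|B| = 2^{n-1} - 1$ is odd, forcing $|H| = 1$ and $H = \{0\}$. Thus $d(b_1) = 0$ can occur only when $b_1 = 0$.

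This finishes the proof at once: at least $|B| - 1$ of the elements $b_1 \in B$ satisfy $d(b_1) \geq 1$, so $\sum_{b_1 \in B} d(b_1) \geq |B| - 1$, and therefore $T(A) \geq 3 \cdot 2^{n-1}$. There is no real obstacle in this plan --- the whole argument rests on the parity of $|B|$, which is precisely why the case $M = 2^{n-1}+1$ of Conjecture~\ref{conj:samsud} is so much more accessible than larger values of $M$, where the complement can have even size and the stabiliser argument collapses.
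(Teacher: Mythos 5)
Your proof is correct, and it is genuinely different from the paper's. The paper decomposes the count of Schur triples by layers $L_a$, lower-bounds each contribution $|C(a,a,a+)|$ etc.\ via a regular directed graph argument (Claim~\ref{claim:regugraph}), and then runs an optimisation over the layer sizes $|S_a|$. You instead pass to the complement $B = \ZZ \setminus A$ of odd size $2^{n-1}-1$, use the exact convolution identity $r_A(z) = |A| - |B| + s_B(z) = 2 + s_B(z)$ to convert the count into $T(A) = 2^n + 2 + \sum_{b_1 \in B} d(b_1)$, and then observe that $d(b_1) = 0$ forces $b_1$ into the stabiliser subgroup $H = \{h : h + B = B\}$, whose order divides the odd number $|B|$ inside the $2$-group $\ZZ$, so $H = \{0\}$. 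This gives $\sum_{b_1} d(b_1) \geq |B| - 1$, which is exactly what is needed. I checked the identity (write $r_A(z) = |A| - |A \cap (z - B)| = |A| - |B| + |B \cap (z-B)|$), the double-count $\sum_{z \in A} s_B(z) = \sum_{b_1\in B} d(b_1)$, and the final arithmetic $2^n + 2 + (2^{n-1} - 2) = 3\cdot 2^{n-1}$; all are correct, and the extremal set $L_1 \cup \{2\}$ achieves equality, confirming tightness. Your approach is shorter and conceptually cleaner for this particular $M$, while the paper's layer-by-layer method is closer in spirit to the general Conjecture~\ref{conj:samsud}. You correctly note that the parity of $|B|$ is what kills the stabiliser for $M = 2^{n-1}+1$ and that this argument does not extend directly; the paper's framework is set up to plausibly handle more values of $M$, at the cost of a longer proof even in this first case.
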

Following~\cite{samsud}, we define the number of $2$-cubes in a set $A\subseteq \ZZ$ by
$$\mathrm{ST}(A)=\left|\left\{(x,y,z)\in A^3 : x + y = z\right\}\right|,$$
so that if $x+y=z$ and $x\neq y$ then we consider $(x,y,z)$ and $(y,x,z)$ as different triples. Observe that a centred set of size $2^{n-1}+1$, e.g.~the set $L_1\cup\{2\}$, contains precisely $3\cdot 2^{n-1}$ such $2$-cubes, hence Theorem~\ref{thm:kleitmananal} is sharp. The number of $k$-cubes in a set can be defined similarly, and indeed we conjecture that the analog of Samotij's theorem (Theorem~\ref{thm:samotij}) also holds.

\begin{conjecture}[\label{conj:samanal}Analog of Samotij's theorem in $\ZZ$]
Let $n\geq k\geq 2$ and $M$ be integers. Amongst all families $\F\subseteq \ZZ$ of size $|\F|=M$, centred families minimise the number of $2^{k}$-cubes.
\end{conjecture}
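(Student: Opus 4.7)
The plan is to adapt Samotij's compression strategy from the Boolean lattice to the additive setting of $\ZZ$, proceeding by induction on $n$ with an auxiliary induction on $k$. The inductive base ($n=k$, or $M$ small enough that Theorem \ref{thm:erdosanal} already guarantees a centred family of size $M$ contains no $2^k$-cube at all) would be a direct check.

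The main engine would be a compression operation $C_{j \to i}$ for each pair $i<j$: given $\F$, if $L_j \cap \F$ and $L_i \setminus \F$ are both nonempty, remove some $y \in L_j \cap \F$ and insert some $x \in L_i \setminus \F$, choosing $x$ and $y$ so that the number of $2^k$-cubes weakly decreases. Iterating strictly decreases the potential $\sum_i i\, |L_i \cap \F|$ and so terminates with a centred family after finitely many steps; the whole argument therefore reduces to a single \emph{local monotonicity lemma} stating that each individual compression step does not increase the cube count. The natural way to prove this lemma is to construct an injection from the \emph{new} cubes (those contained in $\F' = (\F \setminus \{y\}) \cup \{x\}$ and using $x$) to the \emph{old} cubes (those contained in $\F$ and using $y$): one would swap $x$ for $y$ in the offending generating multiset and compensate by altering a second summand by the difference $y - x$, whose $2$-adic valuation is controlled by $\min(i,j)$.

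The main obstacle is precisely this local monotonicity lemma. Altering one generator of a $2^k$-cube changes $2^{k-1}$ sub-sums simultaneously, and forcing all of them to remain in $\F$ is delicate; unlike Samotij's Boolean-lattice argument, one cannot rely on a partial order to drive the injection, and must instead appeal to the interaction between sums and $2$-adic valuations. My hope is that the zero-sum forcing principle behind Theorem \ref{thm:erdosanal} (any $2^{k-1}$ integers in $\ZZ$ contain a nonempty subset summing to zero modulo $2^{k-1}$) is rigid enough to locate a canonical summand whose alteration keeps all $2^{k-1}$ affected sub-sums inside the required layer structure of $\F$.

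A complementary line worth pursuing in parallel is induction on $n$ via the quotient $\pi : \ZZ \to \ZZ / 2\ZZ \cong \mathbb{Z}_2$. Writing $\F = \F_{\mathrm{odd}} \sqcup \F_{\mathrm{even}}$ and identifying $\F_{\mathrm{even}}/2$ with a subset of $\mathbb{Z}_{2^{n-1}}$, the $2^k$-cubes with all-even generators reduce directly to the inductive hypothesis, and cubes with all-odd generators are constrained by parity of the sub-sums. The hard case is cubes whose generating multisets mix both parities, where the combinatorics of which partial sums lie in $L_1$ and which lie in $\bigcup_{i \geq 2} L_i$ becomes intricate. Either route appears ultimately to require the same careful accounting of how the $2$-adic valuation of sub-sums interacts with sums of $2^k$ elements, and this is where I would expect the heart of the proof to lie.
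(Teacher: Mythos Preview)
This statement is presented in the paper as an \emph{open conjecture}, not a theorem; the authors give no proof and explicitly flag it as such (``we conjecture that the analog of Samotij's theorem also holds''). There is therefore no proof in the paper for you to be compared against. Your write-up is not a proof either but a research outline, and you are candid about this.

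As a strategy, the compression idea is natural, but the gap you yourself isolate --- the local monotonicity lemma --- is the entire difficulty, not a technical detail to be filled in later. Even the special case of Schur triples (Conjecture~\ref{conj:samsud}, morally the $2^{1}$-cube instance of this conjecture) is open: the paper establishes only the single value $M=2^{n-1}+1$, and does so not by compression but by ad~hoc layer-counting inequalities (Claims~\ref{claim:firstsam}--\ref{claim:secondsam}). A general swap $y\mapsto x$ with $y\in L_j$, $x\in L_i$, $i<j$ need not decrease the cube count for an arbitrary choice of $x$, and there is no evident mechanism for choosing a ``good'' $x$: unlike chains in the Boolean lattice, a $2^{k}$-cube has no canonical top or bottom element to anchor an injection, and altering one generator perturbs $2^{k-1}$ subset-sums whose layer memberships are essentially independent of one another. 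Your proposed fix (compensate by altering a second generator by $y-x$) does not obviously keep the remaining $2^{k-1}-1$ affected sums inside~$\F$. The parity/quotient route you sketch has the same bottleneck in the mixed-parity case. In short: the proposal correctly identifies where the problem lives, but does not contain an idea that would resolve it, and the paper offers none either.
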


\subsection{When $d$ is not a power of two}

While we have seen that $2^{k-1}$-cubes in $\ZZ$ correspond to $k$-chains in $\mathbb{Z}_2^n$, the case of $d$-cubes where $d$ is not a power of two does not seem to have an analog in $\mathbb{Z}_2^n$. Hence it is not obvious what the extremal constructions should be, and indeed this case exhibits a much more interesting behaviour. Table~\ref{table:bestconstr} illustrates our conjectured largest $d$-cube-free constructions, which we refer to as $\mathcal{C}_d$. We will always assume that $n$ is sufficiently large for our constructions to make sense, in particular $n\geq d$ is always enough. Recall that Theorem~\ref{thm:powertwo} establishes that $\C_d$ is indeed best possible for $d=2,4,8,\ldots$.

\begin{table}[b]
\centering
\begin{tabular}{c | c | c } 
 \hline
 $d$& $\mathcal{C}_d$, the largest conjectured $d$-cube-free subset of $\ZZ$ & \\ [0.5ex] 
 \hline\hline
 2 & $ L_1$ &   \checkmark \\ 
\hline
 3 & $L_1  \cup L_3$ &   \\
\hline
4 & $L_1 \cup L_2$ & \checkmark\\
\hline
5 & $L_1 \cup L_2  \cup  L_4$ & \\
\hline
6 & $L_1 \cup L_2 \cup L_4\cup L_6$ & \\
\hline
7  & $L_1 \cup L_2 \cup L_4\cup L_5$ & \\
\hline
8 & $L_1 \cup L_2 \cup L_3$ &\checkmark \\
\hline
9 & $L_1 \cup L_2 \cup L_3\cup L_5$ & \\
\hline
$\ldots$&$\ldots$&\\
\hline
26 & $L_1\cup L_2 \cup L_3 \cup L_4 ~ ~ \cup ~ ~ L_6 \cup L_7 \cup L_8 ~ ~ \cup ~ ~ L_{10}\cup L_{11}$ &\\ [1ex] 
 \hline
 $\ldots$&$\ldots$& \\
 \hline
\end{tabular}
\caption{The conjectured best constructions}
\label{table:bestconstr}
\end{table}

We give an explicit description of this construction $\mathcal{C}_d$ for all $d$ in Section~\ref{sec:notpower}. While we cannot prove that these constructions are best possible (except when $d=2^\ell$), we can show they are best amongst sets that are unions of layers.
\begin{theorem}\label{thm:bestlayer}
Let $d,n$ be positive integers with $d\leq n$. Then $\mathcal{C}_d$ is the largest $d$-cube free subset of $\ZZ$ amongst all sets that can be written as a union of some layers.
\end{theorem}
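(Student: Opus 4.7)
The plan is to rephrase the problem as a combinatorial optimization over layer-index sets $T\subseteq[n+1]$: write any union of layers as $A_T:=\bigcup_{i\in T}L_i$, and then compare such $A_T$ against the explicit family $\mathcal{C}_d$ of Section~\ref{sec:notpower}. First I would reduce to the case $\min T=1$: since the map $x\mapsto 2^{s-1}x$ sends $L_i$ to $L_{i+s-1}$ and commutes with the projective-cube operation ($\Si(cS)=c\cdot\Si S$), any union of layers with $\min T=t$ can be transported to a union of layers in $\mathbb{Z}_{2^{n-t+1}}$ whose minimum index is $1$, preserving both cardinality and $d$-cube-freeness.

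Next I would extract a combinatorial criterion. Given a multiset $S=\{a_1,\ldots,a_d\}$ with $a_i\in L_{t_i}$ and $t_i\in T$, the layer of a partial sum $\sum_{i\in I}a_i$ is determined by the layer-tuple $(t_i)$ together with the ``odd parts'' $a_i/2^{t_i-1}$: when $\min_{i\in I}t_i$ is attained uniquely the sum lies in that minimum layer, while otherwise the odd parts allow some flexibility in the resulting valuation. The canonical choice $a_i=2^{t_i-1}$ produces cubes whose partial sums are sums of distinct powers of two; for instance, taking every $a_i$ equal to $2^{j-1}$ forces the obstruction $\{v_2(k)+j:1\le k\le d\}\not\subseteq T$. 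Mixed multisets yield further obstructions, and the upshot is that $d$-cube-freeness of $A_T$ becomes a purely combinatorial condition on $T$.

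Having set this up, I would verify that $\mathcal{C}_d$ itself satisfies the criterion by exploiting its block-and-gap structure from Table~\ref{table:bestconstr}: for any multiset $S\subseteq\mathcal{C}_d$ of size $d$, some partial sum must have valuation landing in one of the gaps of $\mathcal{C}_d$. The most substantive step is the optimality: any valid $T$ has $|A_T|\le|\mathcal{C}_d|$. Here I would use an exchange argument: because $|L_i|=2^{n-i}$ halves as $i$ grows, swapping a layer of $T$ for one of smaller index (whenever the swap preserves $d$-cube-freeness) strictly increases $|A_T|$. Iteratively applying such swaps to any valid $T$, the process should terminate at a configuration whose shape exactly matches the block-and-gap pattern of $\mathcal{C}_d$.

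The main obstacle I expect is this last exchange step: showing that every valid $T$ not already in standard form admits a swap which simultaneously preserves $d$-cube-freeness and strictly increases $|A_T|$, and that the resulting standard form is precisely $\mathcal{C}_d$. A careful case analysis driven by the binary expansion of $d$, paralleling the inductive construction of $\mathcal{C}_d$ in Section~\ref{sec:notpower}, will likely be required.
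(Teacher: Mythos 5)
Your outline correctly identifies the two halves of the theorem, but it papers over the single hardest step. When you write that ``for any multiset $S\subseteq\mathcal{C}_d$ of size $d$, some partial sum must have valuation landing in one of the gaps of $\mathcal{C}_d$,'' you are asserting the conclusion without supplying the mechanism. The difficulty is the cascade: once a subset sum from the first block lands in the second block, you must combine it with the remaining generators and repeat, but you can only do this if the subsets you are summing over are \emph{disjoint}. If two subset sums from the same block share a generator, their sum need not be an element of $\Si S$ at all, and the cascade breaks. The paper explains this pitfall explicitly after the $\C_{26}$ example; getting the required disjointness is exactly the content of Lemma~\ref{lem:keylemma}, whose proof (via three types of compressions, a downward induction on $r$, and the Alon--Freiman building-up argument) occupies the bulk of Section~\ref{sec:notpower}. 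Without that lemma or a substitute, the claim that $\mathcal{C}_d$ is $d$-cube-free remains unproved, so your ``combinatorial criterion on $T$'' has no rigorous foundation.

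For the optimality half, your exchange argument is a plausible sketch but is entirely unexecuted, and you yourself flag it as the main obstacle. You would need to (i) show every valid $T$ not in the target shape admits a swap to a smaller index that preserves $d$-cube-freeness, and (ii) characterize the fixed points of the process as exactly $\mathcal{C}_d$; both of these require control you have not set up. The paper avoids this entirely in Claim~\ref{claim:cdmax}: given a union of layers $S$ with $|S|>|\mathcal{C}_d|$, it locates the first index $s$ where $S$ and $\mathcal{C}_d$ differ (which forces $L_s\subset S$, $L_s\cap\mathcal{C}_d=\emptyset$) and directly builds a multiset $T$ of generators (powers of two keyed to the blocks below $s$) with $\Si T\subset S$ and $|T|\ge d$. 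This direct construction is much simpler than running a termination/uniqueness argument for an exchange process. I would recommend isolating the disjointness statement as the crucial missing lemma, and replacing the exchange argument with the explicit construction for the maximality step.
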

Our main tool in the proof of Theorem~\ref{thm:bestlayer} is the following elementary lemma, which we believe is interesting in its own right.
\begin{lemma}\label{lem:keylemma}
Let $k\geq 1$ and $x\geq 0$ be integers. Given $2^k+x$ not necessarily distinct integers $a_1,a_2,\ldots,a_{2^k+x}$, at least one of the following two statements holds.
\begin{enumerate}
\item There exists a  subset of these integers whose sum is divisible by $2^{k}$ but not by $2^{k+1}$.
\item There exist $x+1$ disjoint non-empty sets $A_1,\ldots,A_{x+1}\subseteq \{1,2,\ldots,2^k+x\}$, such that for all $s\leq x+1$, we have
$$ \sum_{i\in A_s}a_i\equiv 0 ~ \left(\text{\emph{mod} }2^{k+1}\right).$$
\end{enumerate}
\end{lemma}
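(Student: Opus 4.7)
The plan is to prove the lemma by induction on $k \geq 1$. For the base case $k=1$, I would classify the $a_i$ by residue modulo $4$: the failure of~(1) forbids any $a_i \equiv 2 \pmod 4$ and forbids two $a_i$'s with the same odd residue class (such a pair would sum to $2 \pmod 4$). Thus at least $N - 2 = x$ of the integers are divisible by $4$. Those singletons, together with the pair $\{a_i,a_j\}$ with $a_i\equiv 1$ and $a_j\equiv 3 \pmod 4$ when both residues appear, give $\ge x+1$ disjoint subsets with sum $\equiv 0 \pmod 4$.

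For the inductive step, I would apply the lemma at level $k-1$ to the $2^k+x$ integers, viewed as $2^{k-1} + (2^{k-1}+x)$ integers. If case~(2) at level $k-1$ holds, it yields $2^{k-1}+x+1$ disjoint subsets summing to $0 \pmod{2^k}$; under the failure of~(1) at level $k$, each of these must in fact sum to $0\pmod{2^{k+1}}$, giving more than enough disjoint zero-sum subsets. Otherwise case~(1) at level $k-1$ produces a \emph{type-$1$} subset $T_1$ with $|T_1| \le 2^{k-1}$ and sum $\equiv 2^{k-1} \pmod{2^k}$; its sum modulo $2^{k+1}$ therefore lies in one of two \emph{classes} $\{2^{k-1},\,3\cdot 2^{k-1}\}$.

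Next I would iterate the extraction on the remainder. By pigeonhole on the two classes, any three extracted type-$1$ subsets must contain two in the same class, and the union of any two disjoint same-class type-$1$ subsets sums to $\equiv 2^k \pmod{2^{k+1}}$, witnessing~(1) at level $k$ and contradicting the standing assumption. Hence at most two type-$1$ subsets are extracted before case~(2) at level $k-1$ triggers on the remainder. If $0$ or $1$ type-$1$ subset is extracted, the outer case-(2) batch already yields $\ge x+1$ disjoint zero-sums (using $|T_1|\le 2^{k-1}$ to absorb the loss). When two type-$1$ subsets $T_1,T_2$ in different classes are extracted, $T_1\cup T_2$ is itself zero-sum modulo $2^{k+1}$ and contributes one additional disjoint subset.

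The main obstacle is the final bookkeeping in this last subcase when $|T_1|+|T_2|$ is close to $2^k$: the outer batch then gives only $2^{k-1}+x-|T_1|-|T_2|+1$ zero-sums, so adding $T_1\cup T_2$ alone may still fall short of $x+1$. To close the gap I would apply the lemma once more, now to the multiset $T_1\cup T_2$ itself, which has $|T_1|+|T_2|\ge 2^{k-1}$ elements and still satisfies the failure of~(1). Case~(2) at level $k-1$ applied inside $T_1\cup T_2$ yields $|T_1|+|T_2|-2^{k-1}+1$ further disjoint zero-sums, so the grand total is
\[
(2^{k-1}+x-|T_1|-|T_2|+1)+(|T_1|+|T_2|-2^{k-1}+1)=x+2 \ge x+1,
\]
as required. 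The alternative sub-case where case~(1) at level $k-1$ instead holds inside $T_1\cup T_2$ is handled by a splitting argument: the resulting type-$1$ subset $T'\subseteq T_1\cup T_2$ either lies inside one of $T_1,T_2$ (in which case its complement within that $T_i$ is forced to be zero-sum modulo $2^{k+1}$, producing a new disjoint zero-sum piece) or straddles both, and then pairing $T'$ with the disjoint opposite-class piece among $\{T_1,T_2\}$ yields either~(1) at level $k$ directly or the missing zero-sum subset. The delicate part is verifying that this splitting always returns enough disjoint subsets to match the count $x+1$, uniformly in the sizes of $T_1,T_2$.
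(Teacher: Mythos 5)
Your base case $k=1$ is correct, and the early steps of your inductive argument are reasonable: you correctly observe that under the failure of~(1) at level $k$, any two type-$1$ subsets extracted by case~(1) at level $k-1$ must fall into different classes $\{2^{k-1},3\cdot 2^{k-1}\}\pmod{2^{k+1}}$, so at most two can be extracted. However, there is a genuine gap at the heart of your fix for the hard subcase, and it concerns the logical form of the lemma itself.

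The lemma is an \emph{inclusive} disjunction: its conclusion is that (1) or (2) holds, not that exactly one of them holds. When you apply the lemma at level $k-1$ to the multiset $T_1\cup T_2$ and ask for $\lvert T_1\rvert+\lvert T_2\rvert-2^{k-1}+1$ disjoint zero-sums from case~(2), you are implicitly assuming that if you can somehow handle case~(1), then case~(2) is delivered. But case~(1) at level $k-1$ \emph{always} holds inside $T_1\cup T_2$ --- indeed $T_1$ itself is a subset of $T_1\cup T_2$ whose sum is $\equiv 2^{k-1}\pmod{2^k}$ --- so the lemma, applied to $T_1\cup T_2$, conveys no information whatsoever about (2). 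Your ``alternative sub-case'' splitting argument is supposed to rescue this, but it too fails: if the type-$1$ subset $T'\subseteq T_1\cup T_2$ that the lemma hands you is $T'=T_1$ itself, the complement $T_1\setminus T'$ is empty and produces no new piece, and if $T'$ straddles both $T_1$ and $T_2$ then $T'$ is disjoint from \emph{neither} of them, so ``pairing $T'$ with the disjoint opposite-class piece among $\{T_1,T_2\}$'' is ill-defined. A concrete worry: with $\C$ consisting of $2^{k-1}$ copies of $+1$, $2^{k-1}$ copies of $-1$, and $x<2^{k-1}$ further residues, you may extract $T_1=$ (all $+1$'s) and $T_2=$ (all $-1$'s), both of size exactly $2^{k-1}$; the remainder then has fewer than $2^{k-1}$ elements so the lemma at level $k-1$ does not even apply, and your $T_1\cup T_2$ rescue collapses for the reason above. (You also assert $\lvert T_1\rvert\le 2^{k-1}$ without proof; this is in fact provable by a minimality argument, but it needs an argument.)

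For comparison, the paper takes an entirely different route. It works throughout with the iterated sumset $\C^*$ and introduces three \emph{compressions} (replacing a residue $t$ by $\lvert t\rvert$ copies of $\pm 1$; replacing two copies of $2^k-t$ by two copies of $-t$; shifting two large residues by $-2^k$), shows these cannot increase the number of disjoint zero-sums nor enlarge $\C^*$, and combines an Alon--Freiman-type growth lemma for $\C^*$ with a structural dichotomy (Lemma~3.6) saying that a fully compressed $\C$ is concentrated on even residues or on $\pm t,\pm(2^k-t)$. The induction runs on $k$ \emph{and} downward on $r$ from $r=2^k-2$. Your approach of nesting the lemma at level $k-1$ is a natural first idea and is genuinely more elementary where it works, but the inclusive-or structure of the lemma blocks the $T_1\cup T_2$ step, and I do not see a way to repair the counting in the critical regime $\lvert T_1\rvert+\lvert T_2\rvert>2^{k-1}+x$ without importing some additional structural information of the kind the paper's compressions provide.
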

The case of $x=0$ in Lemma~\ref{lem:keylemma} follows from the standard statement that amongst $m$ numbers there is a non-empty subset whose sum is divisible by $m$, but already the $x=1$ case is far from trivial. Guaranteeing $x+1$ subsets whose sum is divisible by $2^{k+1}$ in the second point of Lemma~\ref{lem:keylemma} is easy, the power of our lemma lies in the fact that we can take these sets to be disjoint from each other. Our proof of Lemma~\ref{lem:keylemma} relies on a series of compressions and a downward induction on $x$ with base case $x=2^k-1$. Our proof of Lemma~\ref{lem:keylemma} is quite lengthy, it would be very interesting to have a shorter proof.

Given Theorem~\ref{thm:bestlayer}, we would be  surprised if these constructions were not best possible amongst all sets.
\begin{conjecture}\label{conj:cdbestalways}
Let $d,n$ be positive integers with $d\leq n$. Then $\mathcal{C}_d$ is the largest $d$-cube free subset of $\ZZ$.
\end{conjecture}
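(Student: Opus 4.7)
The plan is to reduce to Theorem~\ref{thm:bestlayer} via a compression argument that transforms an arbitrary $d$-cube-free $A \subseteq \ZZ$ into a $d$-cube-free set of the same or larger size that is a union of layers. Concretely, for each pair $i < j$ I would define a swap operation exchanging elements of $L_j \cap A$ for elements of $L_i \setminus A$, and verify that either the swap preserves $d$-cube-freeness, or else $A$ already contains a $d$-cube. Iterating the swaps, one arrives at a union of layers, and Theorem~\ref{thm:bestlayer} then gives the bound $|A| \leq |\mathcal{C}_d|$.

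The main difficulty is that elements of different layers have different 2-adic valuations, so swapping some $x \in L_j$ for some $y \in L_i$ (with $i < j$) changes which subset sums land in which layers; the naive attempt to ``swap back'' a new cube in $A'$ to produce a cube in $A$ therefore fails. The natural tool to control this is Lemma~\ref{lem:keylemma}, whose dichotomy between producing a single subset sum of a prescribed 2-adic valuation and producing many disjoint subset sums of higher valuation is well-matched to the question of when a swap creates a new cube. I expect the correct compression to preserve a weighted size functional respecting the structure of $\mathcal{C}_d$ visible in Table~\ref{table:bestconstr}, rather than cardinality alone, with Lemma~\ref{lem:keylemma} verifying step-by-step monotonicity.

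Should the compression route prove intractable, a fallback is induction on $n$. Write $A = (A \cap L_1) \sqcup 2B$ with $B \subseteq \mathbb{Z}_{2^{n-1}}$, and apply the inductive hypothesis to bound $|B|$. The obstacle here is hybrid $d$-cubes that mix odd and even generators, which do not descend to cubes of any fixed dimension in $B$; however, grouping the generators by parity imposes strong parity constraints on subset sums, suggesting that the hybrid cubes can be ruled out by a separate parity or Fourier-type argument on $A \cap L_1$. The hardest cases are likely those $d$ whose binary expansion causes $\mathcal{C}_d$ to skip several layers, as for $d = 26$, since these are precisely where the extremal construction is most delicately poised and where the tight bound leaves least room for error.
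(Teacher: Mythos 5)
The statement you are trying to prove is Conjecture~\ref{conj:cdbestalways}, which the paper explicitly leaves \emph{open}: the authors prove only Theorem~\ref{thm:bestlayer} (optimality of $\C_d$ among unions of layers) and the $d=2^\ell$ cases (Theorem~\ref{thm:powertwo}), and in Section~\ref{sec:outro} they single out even the $d=3$ case as unresolved. So there is no paper proof to compare against; more importantly, what you have written is a research plan, not a proof, and both routes you sketch stop exactly where the real difficulty begins.

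For the compression route, you correctly identify the obstruction (a swap between layers changes $2$-adic valuations, so a cube appearing after the swap need not pull back to one before it), but you then posit an unspecified ``weighted size functional'' and assert, without constructing it or verifying a single monotonicity step, that Lemma~\ref{lem:keylemma} will certify it. That functional \emph{is} the missing idea, not a detail to be filled in. Note also that Lemma~\ref{lem:keylemma} as stated is about disjoint zero-sum sub-collections of a fixed multiset of residues; it does not compare a set with its compression, and a genuinely new statement would be needed to play the role you want.

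For the fallback induction on $n$ via $A=(A\cap L_1)\sqcup 2B$, there are two concrete gaps. First, the recursion for $\C_d$ does not respect this split: removing the odd layer and halving sends $\C_d$ to $L_{[1,\ell-1]}\cup 2^{\ell}\cdot\C_{d-2^{\ell}+1}$ inside $\mathbb{Z}_{2^{n-1}}$, and this is in general \emph{not} equal to $\C_{d'}$ for any $d'$ (for instance $\C_7=L_1\cup L_2\cup L_4\cup L_5$ halves to $L_1\cup L_3\cup L_4$, which is no $\C_{d'}$); it is $\C_{d-2^{\ell-1}}$ only when $2^{\ell}\le d<3\cdot 2^{\ell-1}$. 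So the inductive hypothesis does not directly produce the target bound on $|B|$. Second, and more fundamentally, $d$-cube-freeness of $A$ does not translate into $d'$-cube-freeness of $B$ for any fixed $d'$: a cube with $d_1$ odd and $d-d_1$ even generators imposes a joint constraint on $A\cap L_1$ and $B$ that cannot be decoupled by parity alone, and the ``separate parity or Fourier-type argument'' you invoke to kill these hybrid cubes is not supplied. As written, neither route closes the gap; the conjecture remains open.
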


Our paper is organised as follows. In Section~\ref{sec:power} we focus on the $d=2^\ell$ case and prove Theorem~\ref{thm:powertwo}. The construction $\C_d$ is defined in Section~\ref{sec:notpower}, and there we also prove Lemma~\ref{lem:keylemma} and Theorem~\ref{thm:bestlayer}. Theorem~\ref{thm:kleitmananal}, our partial result on the Samotij-Sudakov question, is proved in Section~\ref{sec:samsud}. Some further open questions and conjectures are given in Section~\ref{sec:outro}.


\section{When $d$ is a power of two}\label{sec:power}
Our main goal in this section is to prove Theorem~\ref{thm:powertwo}. We will prove the following stronger statement, that immediately implies Theorem~\ref{thm:powertwo}.
\begin{theorem}\label{thm:stronger}
Let $\ell,n\in\mathbb{N}^+$ be integers with $2^\ell\leq n$. If $A\subset \ZZ$ satisfies $|A|>\left(1-\frac{1}{2^\ell}\right)2^n,$ then there exist integers $x,y\in [2^n]$ such that $\Si \{\underbrace{x,x,\ldots,x}_{2^\ell-1},y\}\subseteq A$.
\end{theorem}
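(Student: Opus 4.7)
The plan is to find $x$ and $y$ separately, in two essentially independent steps. Observe that $\Si\{x,\ldots,x,y\}$ with $2^\ell-1$ copies of $x$ decomposes as the union of the ``half-cube'' $\{jx : 1 \leq j \leq 2^\ell - 1\}$ and the ``translate'' $\{y+jx : 0 \leq j \leq 2^\ell-1\}$. I would first find $x$ so that the half-cube lies in $A$, and then use a separate density argument to produce $y$.

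For Step 1 I would take $x$ uniformly random in the top layer $L_1$ and bound the expected number of bad indices $j \in [1, 2^\ell - 1]$ for which $jx \notin A$. Writing $j = 2^a m$ with $m$ odd (so $0 \leq a \leq \ell-1$), the fact that $m$ is invertible modulo $2^n$ and that multiplication by $2^a$ is a $2^a$-to-$1$ surjection from $L_1$ onto $L_{a+1}$ implies that $jx$ is uniformly distributed on $L_{a+1}$. Setting $b_i := |A^c \cap L_i|$, one obtains $\Pr[jx \notin A] = b_{a+1}/2^{n-a-1}$. Since exactly $2^{\ell-1-a}$ values of $j$ in $[1, 2^\ell - 1]$ satisfy $v_2(j) = a$, the expected number of bad $j$ becomes
$$\sum_{a=0}^{\ell-1} 2^{\ell-1-a} \cdot \frac{b_{a+1}}{2^{n-a-1}} \;=\; 2^{\ell-n}\sum_{i=1}^{\ell} b_i \;\leq\; 2^{\ell-n}|A^c| \;<\; 2^{\ell-n}\cdot 2^{n-\ell} \;=\; 1.$$
Since the expectation is strictly below $1$, some $x \in L_1$ makes every $jx$ lie in $A$.

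For Step 2, fix the $x$ produced by Step 1. The standard complement-union bound yields
$$\Bigl|\bigcap_{j=0}^{2^\ell-1}(A - jx)\Bigr| \;\geq\; 2^\ell |A| - (2^\ell - 1)2^n \;>\; 2^\ell(1 - 2^{-\ell})2^n - (2^\ell - 1)2^n \;=\; 0,$$
so the intersection is non-empty, and any $y$ in it satisfies $y + jx \in A$ for all $j = 0, 1, \ldots, 2^\ell - 1$. Combining with Step 1, $\Si\{\underbrace{x,\ldots,x}_{2^\ell-1}, y\} \subseteq A$, which is the desired degenerate cube.

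The main obstacle is Step 1. The naive union bound with $x$ sampled uniformly over all of $\ZZ$ only gives expected badness below $\ell/2$, and hence succeeds solely for $\ell \leq 2$. The key idea is to restrict the sampling to $L_1$: this shrinks the relevant denominators from $2^n$ to $2^{n-a-1}$, and after telescoping these exactly match the layer sizes, leaving a quantity controlled by the hypothesis $|A^c| < 2^{n-\ell}$. Step 2 is then a routine consequence of the same density assumption.
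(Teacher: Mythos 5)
Your proof is correct, and Step~1 is a genuinely different argument from the paper's. The paper proves the existence of $x$ with $\{x,2x,\ldots,(2^\ell-1)x\}\subseteq A$ via Claim~2.2: it groups the layers into blocks $L_a\cup\cdots\cup L_{a+\ell-1}$ and double-counts, showing that the density of $A$ inside each block is at most $1-\tfrac{1}{2^\ell-1}$ (each point lies in exactly $2^{\ell-1}$ of the dilates $\{x,\ldots,(2^\ell-1)x\}$ with $x\in L_a$). This gives the slightly stronger statement that $|A|>(1-\tfrac{1}{2^\ell-1})2^n$ already forces the dilate set, and the theorem's weaker hypothesis $|A|>(1-2^{-\ell})2^n$ is then only invoked in Step~2. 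You instead run a first-moment argument directly at the weaker threshold by sampling $x$ uniformly from $L_1$: the observation that $jx$ with $v_2(j)=a$ is uniform on $L_{a+1}$ (not merely on the multiples of $2^a$) makes the layer sizes telescope perfectly against the multiplicities $2^{\ell-1-a}$, giving expected badness $2^{\ell-n}\sum_{i\le\ell}b_i<1$. This is shorter and avoids the block partition and the separate handling of the top layers $L_{n-\ell+2},\ldots,L_{n+1}$ that the paper needs. The trade-off is that you do not recover the paper's slightly stronger bound from Claim~2.2, but that bound is not used for anything else. Step~2, the translate argument for $y$, is essentially identical to the paper's.

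One small point worth making explicit in a write-up: the $2^a$-to-$1$ surjectivity of multiplication by $2^a$ from $L_1$ onto $L_{a+1}$ uses $a\le \ell-1<n$, which is guaranteed by $2^\ell\le n$; and since $x\in L_1$ is odd, the elements $x,2x,\ldots,(2^\ell-1)x$ are automatically distinct and non-zero, so the cube produced is a genuine (if degenerate) $2^\ell$-cube.
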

We will first need the following simple claim.
\begin{claim}\label{claim:ap}
If $A\subset \ZZ$ has size $|A| > \left(1-\frac{1}{2^\ell - 1}\right)2^n$ then there exists an integer $x\in\ZZ$ such that $\left\{x,2x,3x,\ldots,\left(2^\ell-1\right)x\right\}\subseteq A$.
\end{claim}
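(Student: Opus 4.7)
The plan is to argue by contradiction, handling the case $0\in A$ separately and otherwise performing a layer-by-layer double count. If $0\in A$, then taking $x=0$ gives $\{x,2x,\ldots,(2^\ell-1)x\}=\{0\}\subseteq A$ and we are done. So suppose $0\in A^c$, and for contradiction assume that for every $x\in\ZZ$ there exists $j\in\{1,\ldots,2^\ell-1\}$ with $jx\in A^c$. Set $b_i:=|A^c\cap L_i|$.

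Fix $v\in\{0,1,\ldots,n-\ell\}$ and count pairs $(x,j)$ with $x\in L_{v+1}$, $j\in\{1,\ldots,2^\ell-1\}$, and $jx\in A^c$. By assumption each $x\in L_{v+1}$ contributes at least one such pair, so the count is at least $|L_{v+1}|=2^{n-v-1}$. On the other hand, decomposing each $j$ uniquely as $j=2^a m$ with $m$ odd and $a\in\{0,\ldots,\ell-1\}$, the map $x\mapsto jx$ sends $L_{v+1}$ onto $L_{v+a+1}$ in a $2^a$-to-one fashion (the scaling by $2^{v+a}$ produces the $2^a$-fold collapse while multiplication by the odd $m$ permutes odd residues), and there are $2^{\ell-a-1}$ such $j$ for each $a$; summing over $j$ shows the count equals $2^{\ell-1}\sum_{i=v+1}^{v+\ell}b_i$. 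Thus
\[
\sum_{i=v+1}^{v+\ell}b_i \;\geq\; 2^{n-v-\ell}.
\]

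Now I sum these inequalities over $v=0,\ell,2\ell,\ldots,(M-1)\ell$ with $M:=\lfloor n/\ell\rfloor$, so that the ranges $[v+1,v+\ell]$ are pairwise disjoint and contained in $\{1,\ldots,n\}$. The left-hand sides add to at most $|A^c\setminus\{0\}|=|A^c|-1$, while the right-hand sides form a geometric series summing to $(2^n-2^r)/(2^\ell-1)$ with $r:=n\bmod\ell\in\{0,\ldots,\ell-1\}$. Therefore
\[
|A^c| \;\geq\; \frac{2^n-2^r}{2^\ell-1}+1 \;>\; \frac{2^n}{2^\ell-1},
\]
the strict inequality holding since $2^r\leq 2^{\ell-1}<2^\ell-1$ for $\ell\geq 2$. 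This contradicts $|A^c|<2^n/(2^\ell-1)$. The case $\ell=1$ is trivial, the hypothesis reducing to $A\neq\emptyset$.

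The subtlest point is the choice of summation: taking every $\ell$-th constraint is what recovers the factor $2^\ell-1$ in the denominator, and the $+1$ contributed by $0\in A^c$ is exactly what pushes the lower bound strictly past $2^n/(2^\ell-1)$. This is why $x=0$ must be handled separately; the rest is routine bookkeeping around $2$-adic valuations in $\ZZ$.
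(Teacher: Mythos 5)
Your proof is correct and is essentially the same double count as the paper's: both establish $|A^c\cap(L_{v+1}\cup\cdots\cup L_{v+\ell})|\ge 2^{n-v-\ell}$ for each starting layer by counting missing multiples, then sum over a partition of the layers into blocks of length $\ell$, using $0\notin A$ to handle the top. The only cosmetic difference is that you count pairs $(x,j)$ and finish by summing the geometric series of lower bounds, whereas the paper counts the sets $\{x,2x,\dots,(2^\ell-1)x\}$ for $x\in L_a$ and phrases the conclusion as a density bound of $1-\tfrac{1}{2^\ell-1}$ on each block.
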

\begin{proof}
Recall the definition of the layers $(L_i)_{i=1}^{n+1}$ from Section~\ref{sec:intro}. For an integer $1\leq a\leq n$, define the set $\mathcal{F}_a$ as
$$\mathcal{F}_a:=\left\{\left\{x,2x,3x,\ldots,\left(2^\ell-1\right)x\right\} : x\in L_a\right\}.$$
Note that if $a\leq n-\ell +1$ then all elements of $\mathcal{F}_a$ have size exactly $2^\ell - 1$. Indeed, if $i_1x = i_2x$ for some $1\leq i_1 < i_2 \leq 2^\ell - 1$ then $2^n | (i_2-i_1)x$. As $a\leq n-\ell + 1$, we have that $x$ is not divisible by $2^{n-\ell + 1}$, moreover $2^\ell$ cannot divide $i_2-i_1$.

The proof goes by contradiction, let $A$ be a counterexample to the statement of Claim~\ref{claim:ap}. Let $B=\left\{x,2x,3x,\ldots,\left(2^\ell-1\right)x\right\}$ be an element of $\mathcal{F}_a$ and observe that $|B\cap L_a| = 2^{\ell-1}$, $|B\cap L_{a+1}| = 2^{\ell - 2}$, etc, and $|B\cap L_{a+\ell-1}|=1$. Note moreover that every element of $\bigcup_{i=a}^{a+\ell-1}L_i$ appears in precisely $2^{\ell-1}$ different elements of $\mathcal{F}_a$. As for every element $B\in\mathcal{F}_a$ there exists an element $x_B\in B$ with $x_B\notin A$, this implies that $$\frac{|A\cap(L_a\cup L_{a+1}\cup\ldots\cup L_{a+\ell -1})|}{|L_a\cup L_{a+1}\cup\ldots\cup L_{a+\ell -1}|}\leq 1-\frac{1}{2^\ell-1}.$$

Now let $b$ be an integer with $n-\ell+2\leq b \leq n+1$ and observe that since $0\notin A$ (as otherwise we could take $x=0$) we have
$$\frac{|A\cap(L_b\cup L_{b+1}\cup\ldots\cup L_{n+1})|}{|L_b\cup L_{b+1}\cup\ldots\cup L_{n+1}|}\leq 1-\frac{1}{|L_{n-\ell + 2}\cup \ldots\cup L_{n+1}|}=1-\frac{1}{2^{\ell-1}}\leq 1-\frac{1}{2^\ell - 1}.$$
Hence we can partition $\ZZ$ in at most $\lceil (n+1)/\ell\rceil$ parts such that the density of $A$ in each part is at most $1-\frac{1}{2^\ell - 1}$. This completes the proof of Claim~\ref{claim:ap}.
\end{proof}

Now we are ready to give the proof of Theorem~\ref{thm:stronger}. 

\begin{proof}[Proof of Theorem~\ref{thm:stronger}]
Let $\ell,n\in \mathbb{N}^+$ be integers with $2^\ell \leq n$ and let $A\subset \ZZ$ be a set of size $|A|>\left(1-2^{-\ell}\right)2^n$. 
Let $x$ be such that $\{x,2x,3x,\ldots,\left(2^\ell-1\right)x\}\subseteq A$, as guaranteed by Claim~\ref{claim:ap}. Note that as $|A|>\left(1-2^{-\ell}\right)2^n$ we have
$$A'=A\cap (A-x) \cap (A-2x)\cap\ldots\cap \left(A-\left(2^{\ell}-1\right)x\right)\neq \emptyset.$$
Let $y$ be an arbitrary element of $A'$ and note that then we have $y,y+x,y+2x,\ldots,y+\left(2^{\ell}-1\right)x\in A$. Hence we have that $$\left\{x,2x,3x,\ldots,\left(2^\ell-1\right)x,y,y+x,\ldots,y+\left(2^{\ell}-1\right)x\right\}=\Si\{\underbrace{x,x,\ldots,x}_{2^{\ell}-1},y\}\subseteq A$$ and $A$ is not $2^\ell$-cube-free. This completes the proof of Theorem~\ref{thm:stronger}.
\end{proof}


\section{When $d$ is not a power of two}\label{sec:notpower}

Our goal in this section is to define the construction $\C_d$ for all integers $d,n$ with $n$ sufficiently large ($n\geq d$, say), then to prove Lemma~\ref{lem:keylemma} and use it to prove Theorem~\ref{thm:bestlayer}.

\subsection{The construction $\C_d$}

Our conjectured largest $d$-cube free subsets of $\ZZ$ always consist of the union of some of the first few layers, e.g.~$\C_{10}=L_1\cup L_2\cup L_3 \cup L_5 \cup L_7$. Which layers we take does not depend on $n$, as long as the construction makes sense, since e.g.~$L_7$ does not exist if $n=4$. Therefore, when defining $\C_d$ for all $d$ we will always assume that there is enough space in $\ZZ$ for our construction to fit (i.e.~no layers past $L_n$ are included). It will always suffice to take e.g.~$n\geq d$ in general. For positive integers $a,b$ with $a\leq b$ we will use the notation $L_{[a,b]}:=L_a\cup L_{a+1}\cup\ldots\cup L_b$.

We define $\C_d$ recursively as follows.
\begin{enumerate}
\item $\C_1 = \emptyset$.
\item If $d\geq 2$ then let $\ell$ be the largest integer such that $2^\ell \leq d$. Let 
$$\C_d :=L_{[ 1,\ell]} ~ \cup ~ \left\{ 2^{\ell + 1}\cdot x ~ : ~ x \in \C_{d-2^\ell + 1}  \right\}.$$ 
\end{enumerate}
In other words, $\C_d$ is the union of the first $\ell$ layers, it skips $L_{\ell + 1}$, and includes a copy of $\C_{d-2^\ell+1}$ in $L_{\ell + 2}\cup L_{\ell + 3}\cup \ldots$ .

The same definition can be rephrased as follows. For any positive integer $k$, define $\alpha(k)$ to be the largest integer $\ell$ with $2^\ell \leq k$, and let $\beta(k):=k-\alpha(k)+1$. Given $d\geq 2$, set $\ell_1 := \alpha(d)+1$ and let $d_1 := \beta(d)$. Set $\ell_2:=\alpha(d_1)+2$ and let $d_2:=\beta(d_1)$. Repeat until one of the $d_i$-s, say $d_q$, becomes equal to one. We will refer to the resulting sequence $(\ell_1,\ell_2,\ldots,\ell_q)$ as the \emph{block vector} of $\C_d$.  Then we construct $\C_d$ by including the first $\ell_1-1$ layers, \emph{excluding} the next layer, including the next $\ell_2-1$ layers, excluding the layer after these, etc. Hence we get that, letting $M:=\sum_{i=1}^q \ell_i$, $$\C_d=L_{[1,\ell_1-1]}\cup L_{[\ell_1+1,\ell_1+\ell_2-1]}\cup\ldots\cup L_{[M-\ell_q+1,M-1]}.$$

\textbf{Example:}
Suppose we want to find $\C_{26}$. 
\begin{itemize}
\item The largest power of two not greater than $26$ is $2^4=16$. So we include the first four layers $L_1\cup \ldots \cup L_4$ and do not include $L_5$. We replace $26$ by $26-15=11$.
\item The largest  power of two not greater than $11$ is $2^3=8$. Now we include the next three layers $L_6\cup L_7 \cup L_8$ and skip $L_9$. We replace $11$ by $11-7=4$.
\item As $4=2^2$, we include the next two layers $L_{10}\cup L_{11}$. We replace $4$ by $4-3=1$ and stop the algorithm since we hit $1$.
\end{itemize}
So the block vector of $\C_{26}$ is $(5,4,3)$ and  we have $$\C_{26}= L_1\cup L_2 \cup L_3 \cup L_4 ~ ~ \cup ~ ~ L_6 \cup L_7 \cup L_8 ~ ~ \cup ~ ~ L_{10}\cup L_{11}=L_{[1,4]}\cup L_{[6,8]}\cup L_{[10,11]}.$$ We will now use this example and Figure~\ref{fig:c26} to illustrate the intuition behind why this construction is $d$-cube free.

\begin{figure}[h]
\includegraphics{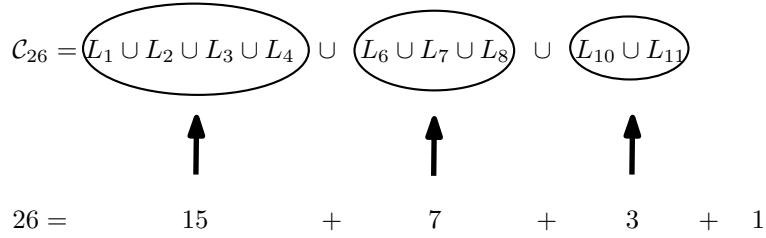}
\caption{$\C_{26}$ is the union of three blocks}
\label{fig:c26}
\end{figure}

Suppose for contradiction that $\C_{26}$ contains a $26$-cube, say $\Si \{x_1,x_2,\ldots,x_{26}\}\subseteq \C_{26}$. Let us call $x_1,\ldots,x_{26}$ the \emph{generators} of the cube. Each of these generators have to lie in either the first block $B_1=L_1\cup L_2\cup L_3\cup L_4$, the second block $B_2=L_6\cup L_7\cup L_8$ or in the third block $B_3=L_{10}\cup L_{11}$. Suppose that precisely $16$ generators lie in $B_1$, $7$ generators lie in $B_2$ and $3$ lie in $B_3$. Consider the $16$ generators lying in $B_1$. The numbers in $B_1$ are all not divisible by $16$, but since we have $16$ generators in $B_1$ we can find a subset sum, say $S$, that is divisible by $16$. As $S\in\Si\{x_1,x_2,\ldots,x_{26}\}\subseteq \C_{26}$ we must have $S\in\C_{26}$, and as $S$ is divisible by $16$ we must have $S\in B_2\cup B_3$. Assume $S\in B_2$. Now the $7$ generators in $B_2$ together with $S$ form $8$ numbers, all divisible by $32$, hence there is a subset sum $S'$ that is divisible by $8\cdot 32$ and thus must be in $B_3$. Now amongst the three generators in $B_3$ together with $S'$ there is a subset sum divisible by $4\cdot 2^9$ and is thus not in $\C_{26}$, which is a contradiction.

The difficulty with making the above intuition rigorous is the following observation. Suppose we are given that $17$ instead of $16$ of the generators lie in $B_1$. Then we can find two sets $S_1,S_2\subset \{x_1,\ldots,x_{26}\}\cap B_1$ such that the sums of elements in $S_1$ and in $S_2$ are both divisible by $16$. The issue is that if $S_1$ and $S_2$ are not disjoint, say they both contain $x_1$, then the number $\sum_{x\in S_1}x + \sum_{x\in S_2}x$ is not necessarily an element of $\Si \{x_1,x_2,\ldots,x_{26}\}$ and hence need not be contained in $\C_{26}$. Luckily Lemma~\ref{lem:keylemma} guarantees that we may take $S_1$ and $S_2$ to be disjoint and the proof goes through.

\subsection{Proof of Theorem~\ref{thm:bestlayer} assuming Lemma~\ref{lem:keylemma}}
The proof consists of two parts. First we use the ideas outlined above, together with Lemma~\ref{lem:keylemma}, to show that $\C_d$ does not contain a $d$-cube. Then we use a simple construction to show that no other set that is a union of layers can be both $d$-cube free and larger than $\C_d$.

\begin{claim}\label{claim:nocube}
For any $d\geq 1$, $\C_d$ does not contain a $d$-cube.
\end{claim}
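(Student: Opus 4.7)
The plan is to prove Claim~\ref{claim:nocube} by induction on $d$, following the intuition sketched after the example of $\C_{26}$. The base case $d=1$ is immediate since $\C_1=\emptyset$ contains no singleton, hence no $1$-cube. For the inductive step, let $\ell=\alpha(d)$, so the first block is $B_1:=L_{[1,\ell]}$, the next skipped layer is $L_{\ell+1}$, and (by the recursive definition of $\C_d$) the remainder $\C_d\setminus B_1$ equals $\{2^{\ell+1}z:z\in\C_{d'}\}$ where $d':=d-2^{\ell}+1<d$. Suppose for contradiction that $\Sigma^*\{x_1,\ldots,x_d\}\subseteq\C_d$, and let $n_1$ be the number of generators $x_i$ lying in $B_1$ (the rest, automatically, lie in $\{2^{\ell+1}z:z\in\C_{d'}\}$).

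If $n_1<2^{\ell}$, then at least $d-n_1\geq d'$ of the generators are divisible by $2^{\ell+1}$; write them as $2^{\ell+1}y_j$ with $y_j\in\C_{d'}$. Because dividing a set of multiples of $2^{\ell+1}$ by $2^{\ell+1}$ commutes with taking $\Sigma^*$, we get $\Sigma^*\{y_1,\ldots,y_{d-n_1}\}\subseteq\C_{d'}$, which in particular contains a $d'$-cube and contradicts the induction hypothesis.

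If $n_1\geq 2^{\ell}$, apply Lemma~\ref{lem:keylemma} with $k=\ell$ and $x=n_1-2^{\ell}$ to the $B_1$-generators. In case (1) of the lemma, there is a subset $I$ of these generators whose sum has $2$-adic valuation exactly $\ell$, so $\sum_{i\in I}x_i\in L_{\ell+1}$; but this sum lies in $\Sigma^*\{x_1,\ldots,x_d\}\subseteq\C_d$, while $L_{\ell+1}\cap\C_d=\emptyset$ by construction, a contradiction. In case (2), we obtain $n_1-2^{\ell}+1$ pairwise disjoint nonempty subsets $A_1,\ldots,A_{n_1-2^\ell+1}$ of the $B_1$-generators whose partial sums $s_t:=\sum_{i\in A_t}x_i$ are all divisible by $2^{\ell+1}$, and again each $s_t\in\C_d$, so $s_t=2^{\ell+1}z_t$ with $z_t\in\C_{d'}$. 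Combining the $z_t$'s with the $d-n_1$ remaining generators outside $B_1$ (divided by $2^{\ell+1}$) yields $d'$ elements of $\C_{d'}$ that arise from \emph{disjoint} subsets of $\{x_1,\ldots,x_d\}$. Hence every subset sum of these $d'$ elements is itself a subset sum of the $x_i$'s, so lies in $\C_d\cap 2^{\ell+1}\ZZ$ and, after dividing by $2^{\ell+1}$, in $\C_{d'}$. This produces a $d'$-cube in $\C_{d'}$, contradicting the induction hypothesis.

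The key step, and the only nonroutine one, is the application of Lemma~\ref{lem:keylemma}: the natural pigeonhole argument gives many $B_1$-subsets with sum divisible by $2^{\ell+1}$, but those subsets need not be disjoint, in which case their partial sums need not belong to $\Sigma^*\{x_1,\ldots,x_d\}$ and the recursion cannot continue. Lemma~\ref{lem:keylemma} is precisely what rescues disjointness, so essentially all of the difficulty is already carried by that lemma, and the proof of Claim~\ref{claim:nocube} itself is just the clean inductive bookkeeping above.
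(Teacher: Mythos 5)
Your proof is correct and follows essentially the same route as the paper: induction on $d$, a case split on whether the number of generators in the first block reaches $2^{\alpha(d)}$, then Lemma~\ref{lem:keylemma} applied with $k=\alpha(d)$ to rule out a sum landing in the skipped layer $L_{\alpha(d)+1}$ and to extract disjoint subsets whose sums are divisible by $2^{\alpha(d)+1}$, reducing to $\C_{d-2^{\alpha(d)}+1}$. The only difference is notational (you index from $\ell=\alpha(d)$ while the paper uses $\ell_1=\alpha(d)+1$); the inductive bookkeeping and the reliance on disjointness from the key lemma are identical.
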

\begin{proof}
The proof goes by induction on $d$, with $\C_1=\emptyset$ not containing any $1$-cube for any $n\geq d=1$. Let $(\ell_1,\ell_2,\ldots,\ell_q)$ be the block vector of $\C_d$, so that  $\C_d=L_{[1,\ell_1-1]}\cup L_{[\ell_1+1,\ell_1+\ell_2-1]}\cup\ldots\cup L_{[M-\ell_q+1,M-1]},$ where $M=\sum_{i=1}^q \ell_i$. Let the blocks of $\C_d$ be defined in the natural way as $B_1=L_{[1,\ell_1-1]}$, $B_2=L_{[\ell_1+1,\ell_1+\ell_2-1]}$ and in general for $1\leq i \leq q$ we set $B_i=L_{[\ell_1+\ldots +\ell_{i-1}+1,\ell_1+\ldots+\ell_i-1]}$. Assume for contradiction that $\Si\{x_1,\ldots,x_d\} \subset \C_d$. By rearranging we can find an integer $d_1$ such that $x_i\in B_1$ if and only if $i\leq d_1$. The proof splits into two cases according to how large $d_1$ is.

Suppose first that $d_1\leq 2^{\ell_1-1}-1$. Then $\{x_{d_1+1},\ldots,x_d\}\subset \C_d\setminus B_1$ and hence $$\Si\{x_{d_1+1},\ldots,x_d\}\subset \C_d\setminus B_1.$$ Observe that $\{2^\ell_1\cdot x : x\in \C_{d-2^{\ell_1-1}+1}\} = \C_d\setminus B_1$, and hence $$\Si\left\{\frac{x_{d_1+1}}{2^{\ell_1}}, \frac{x_{d_1+2}}{2^{\ell_1}},\ldots,\frac{x_d}{2^{\ell_1}} \right\}\subset \C_{d-2^{\ell_1-1}+1}.$$ This is a contradiction, as $\C_{d-2^{\ell_1-1}+1}$ does not contain a $\left(d-2^{\ell_1-1}+1\right)$-cube.

Hence we must have $d_1\geq 2^{\ell_1-1}$. Applying Lemma~\ref{lem:keylemma} with $k=\ell_1-1$ we conclude that either there is a subset of $\{x_1,\ldots,x_{d_1}\}$ whose sum is divisible by $2^{\ell_1-1}$ but not by $2^{\ell_1}$, or we can find $d_1-2^{\ell_1-1}+1$ disjoint non-empty sets $A_1,\ldots,A_{d_1-2^{\ell_1-1}+1}\subseteq \{1,2,\ldots,d_1\}$ such that for all $s\leq d_1-2^{\ell_1-1}+1$ we have $\sum_{i\in A_s}a_i\equiv 0 ~ \left(\text{mod } 2^{\ell_1}\right)$. The first option is impossible, as $\C_d\cap L_{\ell_1}=\emptyset$, hence the second option must occur. For all $j$ with $1\leq j\leq d_1-2^{\ell_1-1}+1$ let us set $s_j :=\sum_{i\in A_j}a_i $. Then as the $A_i$-s were disjoint, we have that $$\Si\{s_1,s_2,\ldots,s_{d_1-2^{\ell_1-1}+1}, ~ a_{d_1+1},a_{d_2+2},\ldots,a_{d}\}\subset \C_d\setminus B_1.$$ Hence $\C_d\setminus B_1$ contains a $\left(d-2^{\ell_1-1}+1\right)$-cube. As before, this implies that $\C_{d-2^{\ell_1-1}+1}$ contains a $\left(d-2^{\ell_1-1}+1\right)$-cube which is a contradiction. This completes the proof of Claim~\ref{claim:nocube}.
\end{proof}

\begin{claim}\label{claim:cdmax}
For any $d\geq 1$, if $S\subset \ZZ$ is a union of layers and $|S|> |\C_d|$, then $S$ contains a $d$-cube.
\end{claim}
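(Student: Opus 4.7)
The plan is to proceed by induction on $d$, with the base case $d = 1$ immediate since $\C_1 = \emptyset$ and any nonempty set contains a $1$-cube. For the inductive step with $d \geq 2$, set $\ell := \alpha(d)$ and let $T \subseteq [n+1]$ denote the indices of the layers making up $S$. Let $i^*$ be the smallest index in $[n+1] \setminus T$. A quick size computation rules out $i^* \leq \ell$: in that situation
$$|S| \leq |L_{[1, i^* - 1]}| + |L_{[i^* + 1, n+1]}| = 2^n - 2^{n - i^*} \leq 2^n - 2^{n - \ell} = |L_{[1, \ell]}| \leq |\C_d|,$$
contradicting $|S| > |\C_d|$. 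Hence $i^* \geq \ell + 1$, and I split into two cases.

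In Case A ($i^* = \ell + 1$), we have $L_{[1, \ell]} \subseteq S$ and $L_{\ell + 1} \cap S = \emptyset$. Setting $S' := S \cap L_{[\ell + 2, n + 1]}$, the recursive definition of $\C_d$ gives $|S'| > |\C_{d - 2^\ell + 1}|$, where the right-hand side is computed in $\mathbb{Z}_{2^{n - \ell - 1}}$ for the rescaled copy after dividing by $2^{\ell + 1}$. Applying the inductive hypothesis to the rescaled union of layers $S' / 2^{\ell + 1} \subseteq \mathbb{Z}_{2^{n - \ell - 1}}$ furnishes a $(d - 2^\ell + 1)$-cube $\Si\{x_1, \ldots, x_{d - 2^\ell + 1}\} \subseteq S'$. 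I would extend this to a $d$-cube by appending $2^\ell - 1$ copies of $z := 1 \in L_1$. The crux is a clean $2$-adic valuation separation: for $1 \leq k \leq 2^\ell - 1$ we have $v_2(kz) = v_2(k) \leq \ell - 1$, so $kz \in L_{[1, \ell]} \subseteq S$; and since each $x_i \in L_{[\ell + 2, n + 1]}$ has $v_2(x_i) \geq \ell + 1$, any mixed sum $kz + \sum_{i \in I} x_i$ with $I \neq \emptyset$ inherits the valuation $v_2(k) \leq \ell - 1$ and therefore also lies in $L_{[1, \ell]} \subseteq S$. Combined with the pure $x$-sums, which already lie in $S'$, every nonempty subset sum of the $d$ generators is in $S$, giving the required $d$-cube.

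In Case B ($i^* \geq \ell + 2$), we have $L_{[1, \ell + 1]} \subseteq S$. Since $d < 2^{\ell + 1}$, every $k \in \{1, 2, \ldots, d\}$ has $v_2(k) \leq \ell$ and hence lies in $L_{[1, \ell + 1]} \subseteq S$. The $d$-cube $\Si\{\underbrace{1, \ldots, 1}_{d}\} = \{1, 2, \ldots, d\}$ is then contained in $S$, concluding the proof. The main obstacle is the bookkeeping in Case A: adjoining copies of $z$ to the inductively supplied cube might a priori produce subset sums outside $S$, but the deliberate skip of $L_{\ell + 1}$ in the construction of $\C_d$ creates exactly the $2$-adic valuation gap between the $x_i$'s and the multiples of $z$ needed for the separation argument to succeed.
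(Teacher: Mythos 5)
Your proof is correct. It takes the same core approach as the paper --- a $d$-cube built from appropriately scaled copies of $1$, verified via the $2$-adic valuation gap at a skipped layer --- but packages it as an induction on $d$ mirroring the recursive definition of $\C_d$, rather than the paper's direct construction. The paper locates the first layer index $s$ at which $S$ and $\C_d$ disagree (necessarily $L_s \subseteq S$ and $L_s \cap \C_d = \emptyset$, by the size hypothesis and $|L_{i-1}| = 2|L_i|$), then explicitly writes down a multiset $T$ of size $\geq d$ consisting of powers of $2$ with multiplicities read off from the block vector, and checks $\Si T \subseteq L_{[1,s]} \cap S$ in one pass, splitting on $s \leq M$ versus $s > M$. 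In your proof, the case $i^* \geq \ell+2$ is exactly the paper's $s = M_1$ subcase (the cube $\{1,\ldots,d\} \subseteq L_{[1,\ell+1]}$), and in the case $i^* = \ell+1$ you rescale the tail $S' = S \cap L_{[\ell+2,n+1]}$ by $2^{\ell+1}$, apply the inductive hypothesis, and append $2^\ell - 1$ copies of $1$ to the resulting cube; unrolling your recursion reproduces the paper's $T$. The inductive framing is arguably cleaner: the valuation bookkeeping need only be carried out for a single rescaling step, and the correspondence with the recursion defining $\C_d$ is immediate, whereas the paper must verify the subset-sum containment for the fully assembled $T$ at once.
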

\begin{proof}
As in the proof of Claim~\ref{claim:nocube}, let $(\ell_1,\ell_2,\ldots,\ell_q)$ be the block vector of $\C_d$, so that  $\C_d=L_{[1,\ell_1-1]}\cup L_{[\ell_1+1,\ell_1+\ell_2-1]}\cup\ldots\cup L_{[M-\ell_q+1,M-1]},$ where $M=\sum_{i=1}^q \ell_i$. Let $s$ be index of the first layer wher $\C_d$ and $S$ differ. Because $|L_{i-1}|=2|L_{i}|$ for all $i\leq n$, we must have $L_s\subset S$ and $L_s\cap \C_d=\emptyset$. We will show that $L_{[1,s]}\cap S$ contains a $d$-cube.

For all $i\leq q$ let $M_k:=\sum_{j=1}^i \ell_j$. 
Let $k$ be the largest integer in the set $\{1,2,\ldots,q\}$ that satisfies $M_k \leq s$. We split the proof into two cases, according to whether $s\leq M$ or $s > M$.

If $s\leq M$ then note that $s=M_k$ and we build a multiset $T$ of size $d$, which will be the collection of the generators of the $d$-cube we find in $S$, as follows. First, include $2^{\ell_k}-1$ copies of $2^{M_{k-1}}$ into $T$. Since $L_{[M_{k-1}+1,M_k]}\subset S$, every subset sum of these is in $S$. Next, for all $i\leq k-1$, include $2^{\ell_i-1}-1$ copies of $2^{M_{i-1}}$ into $T$. Given any subset of $T$, the largest power of two dividing its sum is determined by its smallest summands and their quantity, and hence it is easily verified that $\Si T\subset S$. 

By construction $$|T| = \sum_{i=1}^{k-1} \left(2^{\ell_i-1}-1\right)  + 2^{\ell_k}-1.$$ By the definition of the block vector of $\C_d$, we have that $$\alpha\left(d- \sum_{i=1}^{k-1}  \left(2^{\ell_i-1}-1\right)\right) = \ell_k-1.$$ Hence $d- \sum_{i=1}^{k-1}  \left(2^{\ell_i-1}-1\right)\leq 2^{\ell_k}-1$ and so $|T|\geq d$, as required.

The case of $s>M$ is easier, we construct $T$ as follows. For all $i\leq q$ let $T$ contain $2^{\ell_i-1}-1$ copies of $2^{M_{i-1}}$. Moreover, add to $T$ one copy of the number $2^{s-1}$ (which is an element of $L_s$). Checking that $\Si T\subset S$ is similar to the above, and  $|T|=d$ follows from the definition of the block vector of $\C_d$. This finishes the proof of Claim~\ref{claim:cdmax}.
\end{proof}

Note that it is easy to modify the proof of Claim~\ref{claim:cdmax} to show that the constructions $\C_d$ are maximal, i.e.~adding a single element to $\C_d$ makes it not $d$-cube-free. Now Theorem~\ref{thm:bestlayer} follows from Claims~\ref{claim:nocube} and~\ref{claim:cdmax}. It only remains to prove Lemma~\ref{lem:keylemma}, which will take us a significant effort.

\subsection{The proof of Lemma~\ref{lem:keylemma}}

Given a multiset $S=\{a_1,\ldots,a_d\}$ of size $d$, we define the \emph{iterated sumset} of $S$ as 
$$ S^*=\left\{\sum_{i\in I} a_i :I\subseteq [k]\right\},$$
so that $S^*=\left(\Si S\right) \cup \{0\}$.
The reader should be aware that we will be dealing with both sets and multisets in what follows. Anything that is not an iterated sumset is a multiset, also referred to as a collection. Iterated sumsets (of multisets) are just sets, as defined above.
Given a residue $t$ modulo $2^{k+1}$ we define $|t|$ to be the minimal absolute value of any integer in the residue class of $t$ modulo $2^{k+1}$. We will also refer to $|t|$ as the \emph{absolute value} of $t$.
Given an integer $\lambda$ and a multiset $\C$ we define $\lambda\cdot\C=\{\lambda c|c\in \C\}$ (where the RHS is a multiset).

To begin the proof of Lemma~\ref{lem:keylemma}, let $\C$ be a collection of $2^{k}+r$ non-zero residues modulo $2^{k+1}$ with the property that no sub-collection sums to $2^k$ modulo $2^{k+1}$. Then we need to show $\C$ contains at least $r+1$ disjoint, non-empty subsets whose sums are 0 modulo $2^{k+1}$.

The proof will involve two separate ideas. One, which we shall return to later, involves building up the iterated sumset $\C^*$ by introducing elements of $\C$ one by one, and analysing how it can grow. This idea was used by Alon and Freiman~(\cite{cubealon2}, Lemma 4.2) in the following lemma, for which we include their proof.

\begin{lemma}\label{lem:AlonFreiman}[Special case of Alon-Freiman Lemma]
Any collection $\C$ of $2^{k+1}-1$ non-zero residues modulo $2^{k+1}$ contains a non-empty sub-collection summing to $2^k$ modulo $2^{k+1}$.
\end{lemma}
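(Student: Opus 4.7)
The plan is to build up the iterated sumset one generator at a time, which is the standard Alon--Freiman style argument. Order the elements as $\C=\{c_1,\ldots,c_m\}$ with $m=2^{k+1}-1$, and set $S_i := \{c_1,\ldots,c_i\}^*$, so that $S_0=\{0\}$ and $S_i=S_{i-1}\cup(S_{i-1}+c_i)$. I aim to show $2^k\in S_m$; since $2^k\neq 0$, this forces the corresponding subset of $\C$ to be non-empty, which is what is required.

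The key dichotomy at each step is that either $|S_i|>|S_{i-1}|$, or else $S_{i-1}+c_i=S_{i-1}$, in which case $c_i$ stabilises $S_{i-1}$ and $S_{i-1}$ is a union of cosets of $\langle c_i\rangle$. The easy branch is when strict growth occurs at every step: then $|S_m|\geq m+1=2^{k+1}$, so $S_m=\mathbb{Z}_{2^{k+1}}$ and in particular $2^k\in S_m$.

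Otherwise, fix the smallest $i$ with $|S_i|=|S_{i-1}|$. Then $c_i$ stabilises $S_{i-1}$, and combined with $0\in S_{i-1}$ this gives $\langle c_i\rangle\subseteq S_{i-1}$. Here the crucial feature of the ambient $2$-group enters: because $c_i\neq 0$ in $\mathbb{Z}_{2^{k+1}}$, the subgroup $\langle c_i\rangle$ is non-trivial, and every non-trivial subgroup of a cyclic $2$-group contains the unique involution $2^k$. Hence $2^k\in\langle c_i\rangle\subseteq S_{i-1}\subseteq S_m$, closing the argument.

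The only genuinely non-formal step is that last invocation of the $2$-group structure: if one replaced $2^{k+1}$ by a number with an odd prime factor, the analogous statement (hitting a prescribed non-zero element) would collapse, so the main thing to recognise is that $2^k$ is precisely the element forced into every non-trivial subgroup of $\mathbb{Z}_{2^{k+1}}$. Apart from this the proof is pure pigeonhole plus the coset observation, and I do not anticipate any delicate case analysis.
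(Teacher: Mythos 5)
Your proof is correct and follows essentially the same route as the paper: build the iterated sumset one generator at a time, observe that if growth ever stalls the accumulated sumset must contain the non-trivial cyclic subgroup generated by the offending element (and hence $2^k$), and otherwise the sumset fills out all of $\mathbb{Z}_{2^{k+1}}$. You spell out the "always growing" branch a touch more explicitly than the paper, but the argument is the same.
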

\begin{proof}
The proof considers building up the iterated sumset $\C^*$ by introducing elements of $\C$ one by one. Let $\C=\{c_1,\dots,c_m\}$ where $m=2^{k+1}-1$ and let $\C_i=\{c_1,\dots,c_i\}$. Note that $|\C_1^*|=2$ and $\C_i^*\subseteq \C_{i+1}^*$ for all $i$. If $\C_{i+1}^*=\C_i^*$ for some $i$ then $\C_i^*$ contains the cyclic subgroup of $\ZZ$ generated by $c_{i+1}$. Since $2^k$ is an element of every non-trivial subgroup of $\ZZ$, this completes the proof.
\end{proof}

The second idea is a compression that involves replacing elements of $\C$ in a way that does not change the iterated sumset $\C^*$ or the number of sub-collections that sum to zero. These compressions fall into three categories, which we introduce with the following definition.

\begin{definition}
Let $\C$ be a collection of non-zero residues modulo $2^{k+1}$ with the property that no sub-collection sums to $2^k$ modulo $2^{k+1}$. Suppose that $\C$ contains at least $\lambda>0$ copies of residues $\pm 1$ and also a residue $t$ with $1<|t|\le \lambda+1$. A \emph{type 1 compression} replaces $t$ with $|t|$ copies of the residue 1 if $t\in [1,2^k-1]$ and with $|t|$ copies of $-1$ otherwise. Alternatively, suppose that $\C$ contains a residue $-t$ and two copies of the residue $2^k-t$. Then  replacing the two copies of $2^k-t$ with two copies of $-t$ is called a \emph{type 2 compression}. Finally, if we instead have that $\C$ contains at least $2^{k-1}$ copies of $\pm 1$ and if we have elements $u$ and $v$ that lie in the range $[(3/2)2^{k-1},2^k-1]$ then a \emph{type 3 compression} replaces $u$ and $v$ with $u-2^k$ and $v-2^k$.
\end{definition}

Note that a type 1 compression which replaces the element $t$ increases the number of elements in $\C$ by $|t|-1$. Therefore, provided that we can show that the number of disjoint subsets summing to zero modulo $2^{k+1}$ does not increase by more than $|t|-1$ we will be able to proceed by induction. Type 2 and type 3 compressions do not change the number of elements of $\C$ so we will not be able to immediately apply an induction hypothesis, but provided that we can show that these compressions do not add to the iterated sumset and do not increase the number of disjoint subsets summing to zero modulo $2^{k+1}$ then we will be able to apply them to modify $\C$ in potentially helpful ways. We now prove these properties, justifying our definitions for compressions.

\begin{lemma}\label{lem:compressions}
Let $\C$ be a collection of non-zero residues modulo $2^{k+1}$ with the property that no sub-collection sums to $2^k$ modulo $2^{k+1}$. Let $T_1(\C)$ be the result of applying a type 1 compression to $\C$ (if possible), let $T_2(\C)$ be the result of applying a type 2 compression to $\C$ (if possible) and let $T_3(\C)$ be the result of applying a type 3 compression to $\C$ (if possible). Then $T_1(\C)^*=\C^*$, $T_2(\C)^*\subseteq \C^*$ and $T_3(\C)^*\subseteq \C^*$. Moreover, if $T_2(\C)$ or $T_3(C)$ contain $m$ disjoint sub-collections summing to 0 modulo $2^{k+1}$ then so does $\C$. Lastly, if we write $T_1(C,t)$ for the result of applying a type 1 compression to $\C$ which replaces the element $t$ then we have that if $T_1(\C,t)$ contains $m+|t|-1$ disjoint sub-collections summing to 0 modulo $2^{k+1}$ then $\C$ contains at least $m$ such sub-collections.
\end{lemma}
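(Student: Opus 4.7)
The plan is to handle the three compression types sequentially. The guiding principle is that the hypothesis on $\C$ (no sub-collection sums to $2^k$ modulo $2^{k+1}$) rules out all potentially problematic configurations; once these are eliminated, direct element-substitution arguments will produce the required sub-sums or disjoint sub-collections inside $\C$.

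For type 1, I first establish $T_1(\C)^* = \C^*$. Let $\C$ contain $p$ copies of $+1$, $q$ copies of $-1$ (so $p + q \ge \lambda \ge |t| - 1$) and the element $t$; assume $t > 0$ for concreteness. The $\pm 1$s alone produce every integer in $[-q, p]$, and together with $t$ they produce every integer in $[t - q, t + p]$. The inequality $|t| \le p + q + 1$ guarantees these two intervals meet, so their union is the single interval $[-q, p + t]$, which is exactly what the $p + t$ copies of $+1$ and $q$ copies of $-1$ in $T_1(\C)$ produce. Since the ``other'' elements are untouched, $T_1(\C)^* = \C^*$ follows. For the disjoint sub-collection count, suppose $T_1(\C, t)$ contains $N = m + |t| - 1$ disjoint sub-collections $S_1, \dots, S_N$ summing to zero modulo $2^{k+1}$, and let $J$ index the $S_i$ that use at least one of the $|t|$ new $+1$s, with $K$ the total number of new $+1$s used. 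If $K = |t|$, I merge the sub-collections $\{S_i : i \in J\}$ into one and replace the $|t|$ new $+1$s by the single element $t$: the zero sum is preserved, producing $N - |J| + 1 \ge N - (|t| - 1) = m$ disjoint sub-collections of $\C$. If $K < |t|$, I simply discard the sub-collections in $J$; since $|J| \le K \le |t| - 1$, at least $N - |J| \ge m$ sub-collections remain, all contained in $\C$.

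For types 2 and 3, the iterated sumset inclusions follow by matching each way of using the ``replaced'' elements. For type 2, using $a \in \{0, 1, 2, 3\}$ copies of $-t$ in $T_2(\C)$ is mirrored in $\C$ by using the single $-t$ when $a$ is odd and both copies of $2^k - t$ when $a \ge 2$, exploiting $2(2^k - t) \equiv -2t \pmod{2^{k+1}}$. For type 3, using both $u - 2^k$ and $v - 2^k$ in $T_3(\C)$ corresponds to using both $u$ and $v$ in $\C$, since $u + v - 2^{k+1} \equiv u + v$. The remaining case, using exactly one of $u - 2^k, v - 2^k$, is handled by using either \emph{neither} or \emph{both} of $\{u, v\}$ in $\C$ together with a $\pm 1$ adjustment: each option requires a signed adjustment of magnitude at most $\max(2^k - u, 2^k - v) \le 2^{k-2}$ but in opposite directions, so the bound $p + q \ge 2^{k-1}$ ensures that at least one of the two options has enough $\pm 1$s available in the required direction.

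The disjoint sub-collection claims for types 2 and 3 are where the no-$2^k$ hypothesis plays its central role. If two disjoint sub-collections of $T_2(\C)$ each used exactly one $-t$, then writing one of them as $\{-t\} \cup R$ with $R$ summing to $t$, the set $\{2^k - t\} \cup R \subseteq \C$ would sum to $2^k$, a contradiction. Similarly, any sub-collection of $T_3(\C)$ using only $u - 2^k$ (or only $v - 2^k$) would force some $\{u\} \cup R$ (resp.\ $\{v\} \cup R$) in $\C$ to sum to $2^k$. Once these bad configurations are ruled out, the remaining ones translate directly: two copies of $-t$ in $T_2(\C)$ become both copies of $2^k - t$ in $\C$; all three $-t$s become $-t$ together with both copies of $2^k - t$; and both $u - 2^k, v - 2^k$ in $T_3(\C)$ become $u$ and $v$. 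I expect the main technical hurdle to be the type 1 disjoint count, but the tight bound $|t| \le \lambda + 1$ is precisely what allows the merge-or-drop dichotomy to close.
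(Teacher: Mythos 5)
Your argument is correct in its essentials and follows the same high-level structure as the paper's proof, but two parts diverge and are worth flagging.

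For the $T_3$ sumset inclusion you take a genuinely different route. The paper fixes $\alpha+\beta=2^{k-1}$ copies of $\pm 1$, writes $\{1^{(\alpha)},-1^{(\beta)},u-2^k,v-2^k\}^*$ explicitly as a union of four intervals, and shows the two ``shifted'' intervals are absorbed into $[-\beta,\alpha]\cup[u+v-\beta,u+v+\alpha]$, concluding the sumset is contained in $\{1^{(\alpha)},-1^{(\beta)},u,v\}^*$. You instead argue element-by-element: when a subset uses exactly one of $u-2^k,v-2^k$, trade it for either nothing or $\{u,v\}$ plus a $\pm 1$ correction, noting the two corrections point in opposite directions with total magnitude $(2^k-u)+(2^k-v)\le 2^{k-1}\le p+q$, so one is feasible. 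This works, but when fleshing it out you must be careful that the ``$\pm 1$s available in the required direction'' for a correction of a given sign counts both the unused $\pm 1$s of that sign \emph{and} the used $\pm 1$s of the opposite sign (which may be removed from the subset); only then does the combined capacity equal $p+q$. The naive reading, counting only unused $\pm 1$s of the right sign, does not follow from $p+q\ge 2^{k-1}$ once the subset already contains some of them. The paper's interval computation sidesteps this bookkeeping entirely.

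For the $T_2$ and $T_3$ disjoint-count claims, the paper has a cleaner uniform argument you should compare against: undo every replacement inside every $S_i$; each undo shifts a sum by $\pm 2^k$, so the resulting sub-collections of $\C$ sum to $0$ or $2^k$ modulo $2^{k+1}$, and the latter is forbidden, so they all sum to $0$. Your $T_3$ formulation (``any sub-collection using only $u-2^k$ \dots'') is correct. Your $T_2$ formulation, however, has a hole: you rule out only ``two disjoint sub-collections each using exactly one new $-t$'' and then translate the remaining $0$-, $2$-, and $3$-copy cases, but the configuration in which a single $S_i$ uses exactly one new $-t$ while the second new $-t$ is unused is neither ruled out nor translated. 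Your own argument (write $S_i=\{-t\}\cup R$, observe that $\{2^k-t\}\cup R\subseteq\C$ sums to $2^k$) applies verbatim to that case, so the fix is one sentence, but as written there is a gap.
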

\begin{proof}
We prove the lemma separately for $T_1(\C)$, $T_2(\C)$ and $T_3(\C)$, starting with $T_1(\C)$.

Without loss of generality we assume that $t\in [2,2^k-1]$ (otherwise we multiply by $-1$). We begin by observing that, given $\alpha+\beta=t-1$, we have
$$\{1^{(\alpha)},-1^{(\beta)}, t\}^*=[-\beta,\alpha]\cup[t-\beta,t+\alpha]$$
and 
$$\{1^{\alpha+t},-1^\beta\}^*=[-\beta,t+\alpha].$$
However, since $\alpha+\beta=t-1$ and so $\alpha = t-\beta-1$ so 
$$[-\beta,\alpha]\cup[t-\beta,t+\alpha]=[-\beta,t+\alpha].$$
It follows that $T_1(\C)^*=\C^*$.

For $T_2$ we also assume without loss of generality that $t\in [1,2^k-1]$. We observe that 
$$\{-t,2^k-t,2^k-t\}^*=\{-t,-2t,-3t,2^k-t,2^k-2t\}$$
and
$$\{-t,-t,-t\}^*=\{-t,-2t,-3t\}\subseteq \{-t,2^k-t,2^k-t\}^*$$
whence $T_2(\C)^*\subseteq \C^*$.

For $T_3$ we observe that, given $\alpha+\beta=2^{k-1}$, we have
$$\{1^{(\alpha)},-1^{(\beta)},u-2^k,v-2^k\}^*$$
$$=[-\beta,\alpha]\cup [u-2^k-\beta, u-2^k+\alpha]\cup [v-2^k-\beta, v-2^k+\alpha]\cup [u+v-\beta, u+v+\alpha].$$
However, since $u$ and $v$ that lie in the range $[(3/2)2^{k-1},2^k-1]$ we have that $u+v$ lies in the range $[-2^{k-1},-2]$ and therefore 
$$[u-2^k-\beta, u-2^k+\alpha]\cup [v-2^k-\beta, v-2^k+\alpha]\subseteq [-\beta,\alpha] \cup [u+v-\beta, u+v+\alpha]$$
and therefore 
$$\{1^{(\alpha)},(-1)^{(\beta)},u-2^k,v-2^k\}^*\subseteq \{1^{(\alpha)},-1^{(\beta)},u+v\}^*\subseteq \{1^{(\alpha)},-1^{(\beta)},u,v\}^*.$$
This shows that $T_3(\C)^*\subseteq \C^*$.

Now suppose that $S_1,\dots, S_{m+t-1}$ are disjoint sub-collections of $T_1(\C,t)$ which all sum to 0 modulo $2^{k+1}$ (as above we assume without loss of generality that $t\in[2,2^k-1]$). Consider the $t$ copies of the element 1 in $T_1(\C,t)$ that result from the type 1 compression replacing $t$. If these do not all appear in distinct $S_i$ then by considering the $S_j$ which do not contain any of these 1s we obtain at least $m$ disjoint sub-collections of $\C$ summing to 0 modulo $2^{k+1}$. If they do all appear in distinct $S_i$ then combining the $S_i$ in which they appear into a single big sub-collection $S$ we may replace the $t$ copies of 1 in $S$ with a copy of $t$ to obtain a sub-collection of $\C$. Combined with the rest of the $S_j$, we obtain $m$ disjoint sub-collections of $\C$ summing to 0 modulo $2^{k+1}$.

In the case of $T_2$ the situation is slightly different. Suppose that $S_1,\dots, S_m$ are disjoint sub-collections of $T_2(\C)$ which all sum to 0 modulo $2^{k+1}$. The type 2 compression replaced two copies of $2^k-t$ with two copies of $-t$. Undoing these replacements in the $S_i$ gives $m$ disjoint sub-collections of $\C$ which all have sum either $0$ or $2^k$ modulo $2^{k+1}$. But since no sub-collection of $\C$ sums to $2^k$ modulo $2^{k+1}$ we get $m$ disjoint sub-collections of $T_2(\C)$ which all have sum $0$.

Lastly, we consider the disjoint sums after a type 3 compression. Again, let $S_1,\dots, S_m$ be disjoint sub-collections of $T_3(\C)$ which all sum to 0 modulo $2^{k+1}$. The type 3 compression replaced $u$ and $v$ with $u-2^k$ and $v-2^k$. As above, we see that undoing these replacements in the $S_i$ gives $m$ disjoint sub-collections of $\C$ which all have sum either $0$ or $2^k$ modulo $2^{k+1}$. But since no sub-collection of $\C$ sums to $2^k$ modulo $2^{k+1}$ we get $m$ disjoint sub-collections of $T_3(\C)$ which all have sum $0$.
\end{proof}

The proof of Lemma~\ref{lem:keylemma} will proceed by induction on both $k$ and $r$. The induction on $r$ will proceed downwards from $r=2^k-2$ -- more details on this will follow. In light of Lemma~\ref{lem:compressions} we will be able to use our induction hypothesis on $r$ if we are ever able to apply a type 1 compression (to $\C$ itself or to any $\lambda \cdot \C$ where $\lambda$ is odd). We will therefore care about the properties of `maximally type 1 compressed' collections.

To this end, we return to the idea presented in Lemma~\ref{lem:AlonFreiman}. For our purposes the idea will need to be extended a little, requiring a more detailed analysis of how the iterated sumset of $\C$ can grow whilst avoiding the residue $2^k$ under the additional assumption that no $\lambda \cdot \C$ for $\lambda$ odd can be type 1 compressed. This process is captured in the following lemma.

\begin{lemma}\label{lem:main}
Let $k\ge 3$ and $r\ge 1$. Let $\C$ be a collection of $2^{k}+r$ non-zero residues modulo $2^{k+1}$ with the property that no sub-collection sums to $2^k$ modulo $2^{k+1}$. Assume also that no $\lambda \cdot \C$ for $\lambda$ odd can be type 1 compressed. Then either $\C$ contains $2^{k-1}+r$ even residues, or there is an odd residue $t$ (modulo $2^{k+1}$) such that $\C$ contains $2^{k-1}+r$ residues which are either $\pm t$ or $\pm (2^k-t)$.
\end{lemma}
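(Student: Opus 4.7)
The plan is to combine the Alon--Freiman sumset-growth principle with the no-type-1-compression hypothesis to pin down the structure of $\C$. If $\C$ has no odd element, all $2^k+r$ residues are even and the first alternative of the conclusion holds, so I may assume an odd element is present. Picking an odd $c$ whose $\pm$-class has maximum multiplicity among odd classes of $\C$ and replacing $\C$ by $c^{-1}\cdot\C$ preserves the no-sub-sum-$2^k$ condition and the no-type-1-compression hypothesis for every odd $\lambda$. Hence I assume henceforth that the most common odd class in $\C$ is $\pm 1$, with $\alpha$ copies of $+1$ and $\beta$ copies of $-1$ and total multiplicity $\mu=\alpha+\beta$. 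Applying no-type-1-compression with $\lambda=1$ then shows every other element $t\in\C$ satisfies $|t|>\mu+1$.

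The main structural tool is the sumset decomposition $\C^*=I_0+T^*$, where $I_0=\{-\beta,\ldots,\alpha\}$ is the $(\mu+1)$-element interval generated by the $\pm 1$'s and $T^*$ is the iterated sumset of the remainder $T=\C\setminus(\pm 1)$. The condition that $\C^*$ avoids $2^k$ becomes: $T^*$ avoids the length-$(\mu+1)$ interval $J=[2^k-\alpha,\,2^k+\beta]$. The central claim I aim to establish is that every odd element of $\C$ lies in the cluster $\{\pm 1,\pm(2^k-1)\}$, i.e.\ no ``middle odd'' $b\in\C$ with $\mu+1<|b|<2^k-1$ can exist. I argue by contradiction: given such a $b$, applying the no-type-1-compression hypothesis to $b^{-1}\cdot\C$ forces $|b^{-1}|>s+1$, where $s\le\mu$ is the multiplicity of $\pm b$ in $\C$. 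Combining this with the Alon--Freiman observation that each added summand must strictly enlarge $\C^*$ (otherwise the stabiliser contains a non-trivial subgroup of $\mathbb{Z}_{2^{k+1}}$ and hence $2^k$), I track the shifted intervals $jb+I_0$ for $j\in[-\delta,\gamma]$ with $\gamma+\delta=s$; the lower bounds on $|b|$ and $|b^{-1}|$ should then force some such shift to enter $J$, producing a sub-collection of $\C$ summing to $2^k$ and the desired contradiction.

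Once the central claim is in place, every odd element of $\C$ belongs to $\{\pm 1,\pm(2^k-1)\}$, and unwinding the rescaling by $c^{-1}$ produces the desired cluster $\{\pm c,\pm(2^k-c)\}$ for the original $\C$. If the first alternative fails, at most $2^{k-1}+r-1$ elements are even and the remaining at least $2^{k-1}+1$ odd elements all sit in the cluster; for $r=1$ this directly yields the second alternative. For $r\ge 2$ the remaining gap must be closed using the restriction that $T^*$ avoids $J$, which limits the permissible sub-sums of the even part of $\C$ and tightens the bound on the number of even elements, forcing the cluster to reach the required size $2^{k-1}+r$. The main obstacle I anticipate is the interval-crossing step in the contradiction argument: proving that the shifts $jb+I_0$ cannot all avoid $J$ requires a delicate interplay between the lower bounds $|b|>\mu+1$ and $|b^{-1}|>s+1$ and the orbit of $b$ under addition modulo $2^{k+1}$, exploiting the fact that $b$ and its inverse must have absolute values in precisely the range that prevents an integer shift of $I_0$ along $b$ from skipping the forbidden arc $J$.
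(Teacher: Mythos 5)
Your proposal takes a genuinely different route from the paper, but it contains two acknowledged gaps that are substantial enough that the argument does not close.

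The paper's proof is a greedy sumset-growth argument: starting from a seed $\C_3$ with $|\C_3^*|\ge 6$ (Lemma~\ref{lem:yuck2}), it repeatedly appends the element that maximizes $|\C_{i+1}^*\setminus\C_i^*|$, observes that this growth is at least $3$ per step until some index $j$, uses $|\C^*|\le 2^{k+1}-1$ to bound $j\le 2^{k-1}-\lceil r/2\rceil+1$, and then applies Lemma~\ref{lem:yuck1} to the \emph{tail} $\C\setminus\C_j$ (which has size $\ge 2^{k-1}+r$) to conclude it sits entirely in one even/cluster class. Crucially, the paper never claims anything about the \emph{whole} of $\C$: the seed $\C_j$ is allowed to contain odd elements from other classes. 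Your strategy instead aims for a stronger structural statement --- that \emph{every} odd residue of $\C$ (after rescaling so $\pm 1$ is the dominant odd class) lies in $\{\pm1,\pm(2^k-1)\}$. This is a genuinely different and more ambitious decomposition, and you would still need a further counting step even if it were true.

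The first gap is the central claim itself. Your argument that a ``middle odd'' $b$ with $\mu+1<|b|<2^k-1$ cannot exist rests on the two inequalities $|b|>\mu+1$ and $|b^{-1}|>s+1$, and on the assertion that ``the lower bounds \dots should then force some such shift to enter $J$.'' This is not established. You correctly observe that the type-$1$ constraints give those two inequalities, and the decomposition $\C^*=I_0+T^*$ with $J=[2^k-\alpha,2^k+\beta]$ is correct. But the orbit of $I_0$ under shifts by $jb$ has step length $|b|$, which can far exceed $|J|=\mu+1$ whenever $|b|$ is large; the constraint $|b^{-1}|>s+1$ does not control the wrap-around spacing in a way that obviously forces a collision with $J$. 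Note also that $s$ may be much smaller than $\mu$, so $|b^{-1}|>s+1$ can be a very weak lower bound. You flag this yourself as ``the main obstacle,'' and indeed it is the crux: without it there is no contradiction, and it is not clear the claim is even true, since the paper only ever needs the tail (not all) of $\C$ to cluster.

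The second gap is the count for $r\ge 2$. Even granting the central claim, ``fewer than $2^{k-1}+r$ even residues'' only yields $\ge 2^{k-1}+1$ odd residues in the cluster, which falls short of the target $2^{k-1}+r$ by $r-1$. Your proposal to close this by exploiting ``the restriction that $T^*$ avoids $J$'' is again just an outline. The paper sidesteps this entirely: the $r$ appears already in the bound $j\le 2^{k-1}-\lceil r/2\rceil+1$ via the inequality $3j-3+(2^k+r-j)\le 2^{k+1}-1$, so the tail automatically has size $\ge 2^{k-1}+r$ without any separate even/odd accounting. If you want to pursue your route, you would need a comparable quantitative input; the rescaling and $I_0+T^*$ decomposition alone do not supply it.
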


In order to prove this lemma, we shall need two technical lemmas that analyse the process of building $\C^*$ by taking into account new elements of $\C$ one by one.

\begin{lemma}\label{lem:yuck1}
Let $\C$ be a collection of $2^{k}+r$ non-zero residues modulo $2^{k+1}$ with the property that no sub-collection sums to $2^k$ modulo $2^{k+1}$. Assume also that no $\lambda \cdot \C$ for $\lambda$ odd can be type 1 compressed. For some fixed $i$ let $\C_i\subset \C$ with $|\C_i|=i$. Now choose $\C_{i+1}=C_i\cup\lbrace x_j\rbrace$ for some $x_j\in \C\setminus \C_i$ so that $\C_{i+1}^*\setminus\C_{i}^*$ is maximal. We claim that $|\C_{i+1}^*\setminus\C_{i}^*|$ is greater than 2 unless one of the following cases holds:
\begin{enumerate}
\item $|\C_i^*|\le 5$ or $|C_i^*|\ge 2^{k+1}-5$.
\item All elements of $\C \setminus \C_i$ are even.
\item All elements of $\C\setminus\C_i$ are either $\pm u$ or $\pm(2^k-u)$ modulo $2^{k+1}$ for some odd $u$.
\end{enumerate}
\end{lemma}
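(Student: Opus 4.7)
The plan is to argue by contrapositive: assume $|(\C_i^* + x_j) \setminus \C_i^*| \leq 2$ for every $x_j \in \C \setminus \C_i$, and deduce that one of (1), (2), (3) must hold. Writing $A := \C_i^*$, since $\C_{i+1}^* = A \cup (A + x_j)$, the assumption reads $|(A + y) \setminus A| \leq 2$ for every $y \in \C \setminus \C_i$. If all such $y$ are even, (2) holds; if $|A| \le 5$ or $|A| \ge 2^{k+1} - 5$, (1) holds. So we may pick an odd $x \in \C \setminus \C_i$ and assume $6 \le |A| \le 2^{k+1} - 6$; the task is to derive (3).

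First I would use the odd $x$ to pin down the structure of $A$. Since $\langle x \rangle = \Z_{2^{k+1}}$, translation by $x$ is a single cycle, and $|(A + x) \setminus A|$ equals the number of maximal arcs of $A$ along this cycle. Because $2^k \notin A$ (no sub-collection of $\C \supseteq \C_i$ sums to $2^k$), $A$ is a proper subset, so the assumption forces $A$ to be a union of at most $2$ arcs in the $x$-cycle. Passing to $A' := x^{-1} A$, the set $A'$ is at most $2$ arcs in the standard cyclic order with $6 \le |A'| \le 2^{k+1} - 6$, and for every $y \in \C \setminus \C_i$ the condition becomes $|(A' + \mu) \setminus A'| \le 2$, where $\mu := x^{-1} y$.

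I would then classify the admissible $\mu$. When $A'$ is a single arc of length $L \ge 6$, the constraint forces $\mu$ to have cyclic absolute value at most $2$, i.e.\ $\mu \in \{\pm 1, \pm 2\}$. When $A'$ is two arcs, a parallel coordinate analysis — accounting for the possibility that a shift of order roughly $2^k$ swaps the two arcs — leaves the admissible $\mu$ inside $\{\pm 1, \pm 2, 2^k, \pm(2^k - 1)\}$, with the $\pm 2$ option only arising for degenerate configurations (e.g.\ an arc of length $1$ separated by a gap of $1$ from a long arc) and the $\{2^k, \pm(2^k - 1)\}$ options requiring the arcs to be essentially diametrically opposite and of nearly equal length. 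The case $\mu = 2^k$ is ruled out immediately, since it would give $y \equiv 2^k \pmod{2^{k+1}}$ and hence the singleton sub-collection $\{2^k\} \subseteq \C$ would sum to $2^k$. So the odd admissible $\mu$ lie in $\{\pm 1, \pm(2^k - 1)\}$; translating back using $(2^k - 1) x \equiv 2^k - x \pmod{2^{k+1}}$ for odd $x$, every odd $y \in \C \setminus \C_i$ lies in $\{\pm x, \pm(2^k - x)\} = \{u, -u, 2^k - u, 2^k + u\}$ with $u := x$, matching (3).

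Finally, I would eliminate the only surviving even possibility $\mu = \pm 2$ (i.e.\ $y = \pm 2x$) via the no-type-$1$-compression hypothesis: multiplying $\C$ by the odd scalar $\lambda := x^{-1}$ produces $\lambda \C$ containing at least one copy of $\pm 1$ (from $x \in \C$) together with a residue of absolute value $2$ (from $\pm 2x \in \C$), so $\lambda \C$ admits a type~1 compression, contradicting the hypothesis. Hence every $y \in \C \setminus \C_i$ is odd and of the form demanded by (3). The main obstacle is the two-arc coordinate analysis: carefully enumerating arc lengths, gap sizes, and relative positions, and verifying for each configuration exactly which shifts $\mu$ leave the arc structure almost invariant, is where the bulk of the casework lies.
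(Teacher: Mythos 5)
Your outline follows essentially the same route as the paper: pass to the contrapositive, rule out cases (1) and (2) immediately, normalize by multiplying by an odd scalar so that an odd element of $\C\setminus\C_i$ becomes $\pm 1$, observe that the bound $|(\C_i^*+1)\setminus\C_i^*|\le 2$ forces $\C_i^*$ to be a union of at most two arcs, and then classify which shifts $\mu$ preserve that structure up to two elements. Ruling out $\mu=\pm 2$ via a type~1 compression on $x^{-1}\cdot\C$ and $\mu=2^k$ via the singleton sub-collection $\{2^k\}$ are both correct and match what the paper does.

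The gap is exactly where you say it is: the two-arc analysis is asserted, not proved, and it is the substantive content of the lemma. Your claim that after normalizing the admissible $\mu$ lie in $\{\pm 1,\pm 2,2^k,\pm(2^k-1)\}$ is plausible but you give no argument for it, and a direct enumeration of arc lengths, gap sizes, and alignments (including the possibility that a single shifted arc bridges the gap, and the ``near-swap'' regime $\mu\approx 2^k$) is genuinely messy. The paper sidesteps most of this casework with a clean trick in the disjoint case: if $I_1+a$ is disjoint from $I_1$ and $I_2+a$ is disjoint from $I_2$, then
\[
|(I_1+2a)\setminus I_1|\le |(I_2+a)\setminus I_1|+|(I_1+a)\setminus I_2|\le 2,
\]
and since one of the arcs has length at least $3$ this forces $2a\equiv\pm 2$, i.e., $a\in\{\pm 1,\pm(2^k-1)\}$ at once, with no positional bookkeeping. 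The bridging case is then handled separately by observing the gap must have size at most $2$. If you want to complete your version you would need to carry out the enumeration you flagged as ``the bulk of the casework''; alternatively, adopting the $2a$ observation replaces that enumeration with two short inequalities and closes the argument.
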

\begin{proof}
Suppose we reach a point where all remaining generators increase the sumset by at most 2. Let our pool of remaining generators be called $T=\C\setminus \C_i$, and our iterated sumset so far is $\C_i^*$. Assume that $|\C_i^*|\ge 6$ or we are in case (1).

If there are no odd elements in $T$ then all remaining elements are even and we are in case (2). So assume that there is an odd generator in $T$. Without loss of generality (by multiplying everything by some odd residue $\lambda$) we may assume this generator is $\pm 1$. Our goal is now to show that all remaining generators are $\pm 1$ or $\pm (2^k-1)$.

Since by assumption we know that including the $\pm 1$ increases the iterated sumset size by at most 2, we have that $\C_i^*$ is a union of two intervals $I_1$ and $I_2$ (of course it could be a single interval, which is also a union of two intervals). 

Now we consider the possibilities for other elements in $T$. Suppose $a\in T$. We wish to show $a=\pm 1$ or $\pm (2^k-1)$. 

We have that $|((I_1+a)\cup(I_2+a))\setminus(I_1\cup I_2)|\le 2$. Clearly this allows $a$ to be equal to $\pm 1$. If $a$ is $\pm 2$ then we are done because we can do a type 1 compression and replace the $\pm 2$ with two $\pm 1$s. As usual, we may assume that $a\in[1,2^k-1]$ without loss of generality, by multiplying everything by $-1$. 

Suppose $I_1+a$ intersects $I_1$ but does not intersect $I_2$. Then since $|(I_1+a)\setminus I_1|\le 2$ we have $a\le 2$ and so $a=1$ (since $a=2$ is forbidden), or $|I_1|\le 2$. But the latter case also implies $a=1$ since $I_1+a$ intersects $I_1$. Similarly, if $I_2+a$ intersects $I_2$ but does not intersect $I_1$ then we get that $a=1$.

For a proper interval $I$ we denote the $x\in I$ such that $x+1 \not\in I$ by $M(I)$ and the $x\in I$ such that $x-1 \not\in I$ by $m(I)$.

By the above, if $I_1+a$ intersects $I_1$ we may assume that $I_1+a$ also intersects $I_2$. So $I_1+a$ contains the entire gap $I_{g}$ between $M(I_1)$ and $m(I_2)$ which must therefore have size at most 2. So now let $I$ be the whole interval consisting of $I_1$, $I_2$ and $I_g$. Note that $I+a$ contains at least $\min(a,4)$ new elements that do not belong in $I$, since $|I|\le 2^{k+1}-6+|I_g|\le 2^{k+1}-4$ by assumption. But 
$$2\ge|((I_1+a)\cup(I_2+a))\setminus(I_1\cup I_2)| \ge |(I+a)\setminus I|-|I_g|+|I_g|$$
since at most $|I_g|$ elements from $(I+a)\setminus I$ do not belong to $((I_1+a)\cup(I_2+a))$ and also $I_g\subseteq ((I_1+a)\cup(I_2+a))$. So $a\le 2$ and therefore $a=1$ (since $a=2$ is forbidden). 

Therefore we may assume that $I_1+a$ is disjoint from $I_1$ and $I_2+a$ is disjoint from $I_2$.

Observe that 
$$|((I_1+a)\cup(I_2+a))\setminus (I_1\cup I_2)|=|(I_1+a)\setminus I_2|+|(I_2+a)\setminus I_1|$$
since $I_1$ and $I_2$ are disjoint.

We have that 
$$|(I_1+2a)\setminus I_1|\le |(I_2+a)\setminus I_1|+|(I_1+2a)\setminus (I_2+a)|=  |(I_2+a)\setminus I_1|+|(I_1+a)\setminus I_2| $$
so
$$2\ge |(I_1+a)\setminus I_2|+|(I_2+a)\setminus I_1|$$
$$=|(I_1+2a)\setminus I_1|.$$

Similarly $|(I_2+2a)\setminus I_2|\le 2.$ Since $|I_1|+|I_2|\ge 6$ we may assume without loss of generality that $|I_1|\ge 3$. But since $|(I_1+2a)\setminus I_1|\leq 2$, we deduce that $2a=\pm 1$ or $2a=\pm 2$, so $a$ is $\pm 1$ or $\pm (2^k-1)$.
\end{proof}

We need one final technical lemma that controls the size of $\C_3$.

\begin{lemma}\label{lem:yuck2}
Let $\C$ be a collection of $2^{k}+r$ non-zero residues modulo $2^{k+1}$ with the property that no sub-collection sums to $2^k$ modulo $2^{k+1}$. Assume also that no $\lambda \cdot \C$ for $\lambda$ odd can be type 1 compressed. Either $\C$ contains $2^{k-1}+r$ even residues, or there is an odd residue $t$ (modulo $2^{k+1}$) such that $\C$ contains $2^{k-1}+r$ residues which are either $\pm t$ or $\pm (2^k-t)$, or we can find 3 elements $\{x_1,x_2,x_3\}$ in $\C$ so that $|\{x_1,x_2,x_3\}^*|\ge 6.$
\end{lemma}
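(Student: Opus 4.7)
I would argue the contrapositive: assume $|\{x_1,x_2,x_3\}^*|\le 5$ for every three-element sub-multiset $\{x_1,x_2,x_3\}$ of $\C$, and deduce that (a) or (b) holds. If every element of $\C$ is even, case (a) is immediate since $|\C|=2^k+r\ge 2^{k-1}+r$. Otherwise pick any odd $a\in \C$ and replace $\C$ by $a^{-1}\cdot\C$; since $a^{-1}$ is odd, this scaling preserves both the hypothesis that no sub-collection sums to $2^k$ and the no-type-1-compression assumption, so we may assume $1\in \C$.

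The core of the argument is a direct case analysis based on the expansion
\[
\{1,x,y\}^*=\{0,\,1,\,x,\,y,\,1+x,\,1+y,\,x+y,\,1+x+y\},
\]
enumerating which pairs of these eight expressions can coincide. Using $x,y\ne 0,2^k$ (since no sub-collection sums to $2^k$) and splitting on the parities of $x,y$ and on whether they equal $\pm 1$, one verifies: for distinct odd $x,y\notin\{\pm 1\}$ the sumset has size $8$ if $x+y\ne 0$ and size $6$ otherwise; for any odd $x\notin\{\pm 1\}$ both $\{1,x,x\}^*$ and $\{1,x,-x\}^*$ have size $6$; for two distinct nonzero even $b,c$ the sumset has size $8$ unless $c=-b$, when it has size $6$; $\{1,b,b\}^*$ has size $6$ for every nonzero even $b$; a mixed-parity triple $\{1,a,b\}$ with $a$ odd $\notin\{\pm 1\}$ and $b$ nonzero even admits at most one coincidence, giving size at least $7$; and $\{1,-1,b\}$ with $b$ nonzero even has size $6$. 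Each such triple thus violates our assumption and cannot appear as a sub-multiset of $\C$.

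These restrictions, combined with the no-type-1-compression hypothesis (which, once $\C$ contains $\lambda$ copies of $\pm 1$, forbids every $t\in \C$ with $1<|t|\le \lambda+1$), imply that every odd element of $\C$ lies in $\{\pm 1\}$ and that $\C$ contains at most one distinct even residue and at most one distinct odd residue outside $\{\pm 1\}$, each with multiplicity one. A final case split closes the argument: if a single even $b$ lay in $\C$, then $\lambda=n_1+n_{-1}=|\C|-1=2^k+r-1$, forcing $|b|>2^k+r$, which is impossible since $|b|\le 2^k$; and if a single odd $a\notin\{\pm 1\}$ lay in $\C$, the restrictions above force $|\C|\le 2$, contradicting $|\C|=2^k+r$. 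Hence $\C\subseteq\{\pm 1\}\subseteq\{\pm t,\pm(2^k-t)\}$ with $t=1$, and case (b) holds with all $|\C|=2^k+r\ge 2^{k-1}+r$ elements.

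The main obstacle will be the combinatorial bookkeeping in the sumset computation. Although each individual equation is elementary, one must systematically enumerate all configurations in which at least three of the eight expressions collapse to the same value, and carefully respect the multiset structure of $\C$ (an identification $x=y$ at the residue level is only usable if the corresponding residue appears with multiplicity $\ge 2$ in $\C$). The no-compression hypothesis is indispensable here: it excludes small-absolute-value residues such as $\pm 2$ once $\pm 1$'s are present in $\C$, and without this input the conclusion of the lemma would fail.
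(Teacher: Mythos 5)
Your proof takes a genuinely different route from the paper. The paper argues top-down: first it disposes of the case of three distinct odd residues (whose pairwise sums already give a $6$-element iterated sumset), then splits on whether $\C$ has at most or more than $2^{k-1}$ even elements, in each case producing a triple of the form $\{u,u,v\}$ whose iterated sumset has size at least $6$ unless $u,v$ are forced into $\{\pm t,\pm(2^k-t)\}$. You instead argue the contrapositive after scaling so that $1\in\C$, exhaustively enumerate the possible $\{1,x,y\}^*$ by parity and multiplicity, and invoke the no-compression hypothesis to exclude small residues, ultimately concluding that (after scaling) $\C\subseteq\{\pm1\}$ unless all of $\C$ is even. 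Your route is more laborious but delivers a stronger structural conclusion; the paper's is shorter because it never needs to normalize to $1\in\C$ nor to invoke compressions.

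Two things are worth flagging. First, your closing remark that the no-compression hypothesis is ``indispensable'' and that ``without this input the conclusion of the lemma would fail'' is not correct: the paper's proof of this lemma does not use the no-compression hypothesis at all, so the lemma holds without it. What is true is that \emph{your particular route} needs it, e.g.\ to rule out $b=\pm2$ in the $\{1,-1,b\}$ computation, and to kill the residual case of a single non-$\pm1$ element. Second, your enumeration omits the multisets $\{1,1,a\}$ and $\{1,-1,a\}$ with $a$ odd outside $\{\pm1\}$; these are exactly what is needed for the assertion that ``the restrictions above force $|\C|\le 2$'' in your final split, so as written that step does not follow from what you listed. (A cleaner fix is to argue in parallel with your even case: with $\lambda=n_1+n_{-1}=|\C|-1$ the no-compression hypothesis forces $|a|>2^k+r$, impossible.) With these small patches the argument is sound.
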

\begin{proof}
Suppose that we can find $x_1,x_2,x_3$ odd and distinct in $\C$. Then $\{x_1,x_2,x_3\}^*$ contains $\{x_1,x_2,x_3,x_1+x_2,x_1+x_3,x_2+x_3\}$ which are all distinct and we are done.

Suppose there are at most $2^{k-1}$ even elements in $\C$. Then we may now assume that the odd residues, of which there are at least $2^{k-1}+r$, all lie in at most two residue classes, say $u$ and $v$, and without loss of generality $u$ appears at least twice. If $u\neq v$ then 
$$\{0,u,v,2u,u+v,2u+v\}\subseteq \{u,u,v\}^*$$
and so $|\{u,u,v\}^*|\ge 6$ unless $u=2^k\pm v$ or $u=\pm v$. In this case there is an odd residue $t$ (modulo $2^{k+1}$) so that $u$ and $v$ are both either $\pm t$ or $\pm (2^k-t)$ so we are done.

We may now assume that $\C$ contains more than $2^{k-1}$ even residues, so in particular it contains a repeated even residue $u$. Note also that if $\C$ does not contain $2^{k-1}+r$ even residues then it contains at least $2^{k-1}$ odd residues. Let $v$ be an odd element of $\C$. Then $\{0,u,2u,v,u+v,2u+v\}\subseteq \{u,u,v\}^*$ and these are all distinct, so $|\{u,u,v\}^*|\ge 6$.
\end{proof}

We are now ready to prove Lemma~\ref{lem:main}.

\begin{proof}[Proof of Lemma~\ref{lem:main}]

By Lemma~\ref{lem:yuck2} we are either immediately done or we may find $\C_3$ of size 3 such that $|\C_3^*|\ge 6$. Now inductively define $\C_{i+1}=C_i\cup\{x\}$ for some choice of $x$ in $\C\setminus \C_i$ so that $\C_{i+1}^*-\C_{i}^*$ is maximal. Let $j\ge 3$ be minimal so that $|\C_{j+1}^*\setminus \C_j|\le 2$. Then $|\C_j^*|\ge 3(j-3)+6=3j-3.$ Note that we always have that $|\C_{i+1}^*-\C_{i}^*|\ge 1$, since, by Lemma~\ref{lem:AlonFreiman}, the size of the iterated sumset increases by at least 1 whenever a new element is introduced. Since $|\C^*|\le 2^{k+1}-1$ we have
$$3j-3+2^k+r-j\le 2^{k+1}-1$$
which implies that
$$j\le 2^{k-1}-\Big\lceil\frac{r}{2}\Big\rceil+1$$
and so
$$2^k+r-j \ge 2^{k-1}+\Big\lceil\frac{3r}{2}\Big\rceil-1\ge 2^{k-1}+r.$$
This is the number of remaining elements in $\C$. By Lemma~\ref{lem:yuck1} we may deduce that either 
\begin{enumerate}
\item $|\C_i^*|\le 5$ or $|C_i^*|\ge 2^{k+1}-5$.
\item All remaining elements of $\C$ are even.
\item All remaining elements of $\C$ are either $\pm u$ or $\pm(2^k-u)$ modulo $2^{k+1}$ for $u$ odd.
\end{enumerate}
In case (2) or (3) we are done since the number of remaining elements is at least $2^{k-1}+r$. Note also that $|\C_i^*|\ge |C_3^*|\ge 6$, while if $|C_i^*|\ge 2^{k+1}-5$ then we must have $2^{k-1}+r\le 4$ since each remaining element increases the size of the iterated sumset by 1 and $|\C^*|\le 2^{k+1}-1$, but $k\ge 3$ and $r\ge 1$ so this is impossible.
\end{proof}

We are finally ready to prove Lemma~\ref{lem:keylemma}.

\begin{proof}[Proof of Lemma~\ref{lem:keylemma}]
We prove the result by induction on $k$ and $r$. The cases $k\le 2$ are a trivial check. The induction goes downwards on $r$ starting at a base case $r=2^k-2$. The case $r=0$ is also done separately. These base cases are covered in detail as follows.
\begin{enumerate}
\item $k\le 2$: This requires checking that given $4+r$ residues modulo $8$ that avoid a sum of $4$ modulo $8$ then we can find $r+1$ sums which are $0$ modulo $8$, since the $k=1$ case has $0\le r\le 2-2=0$ so is trivial. The above check for $k=2$ can be done by hand.
\item $r=0$: This follows immediately from the standard result that among $n$ numbers there is a sum which is $0$ modulo $n$. In our case we have $2^k$ numbers in $\C$ so there is a non-trivial sub-collection with sum 0 modulo $2^k$. Since no sum is $2^k$ modulo $2^{k+1}$ the sum is in fact $0$ modulo $2^{k+1}$.
\item $r=2^k-2$: Observe that in this case we have $|\C|=2^{k+1}-2$ and $|\C^*|=2^{k+1}-1$. Therefore if we list the elements of $\C=\{x_1,\dots,x_s\}$ and define $\C_i=\{x_1,\dots,x_i\}$ then we must have that $|\C_i^*|=i+1$ for every $i$. In particular, $|\C_{i+1}^*\setminus C_i^*|=1$ for each $i$. In particular, $C_2^*$ has size 3. This holds for any ordering of the $x_i$, but if $u\neq \pm v$ then $|\{u,v\}^*|=4$ so we find that all $x_i$ are $\pm t$ for some residue $t$. Since $|\C|=2^{k+1}-2$ we find that $t$ cannot be even (otherwise all elements of $\C$ are even and so $|\C^*|\le 2^{k-1}$ which is impossible) so by scaling we may assume all elements of $\C$ are $\pm 1$. The only possibility that avoids a sum of $2^k$ is to have $2^k-1$ copies of $+1$ and $2^k-1$ copies of $-1$. In this case we get $2^k-1=r+1$ disjoint pairs which sum to 0 (by pairing up the $+1$s and $-1$s).
\end{enumerate}
For the inductive case we may apply Lemma~\ref{lem:main}. We find that either $\C$ contains $2^{k-1}+r$ even residues, or there is an odd residue $t$ (modulo $2^{k+1}$) such that $\C$ contains $2^{k-1}+r$ residues which are either $\pm t$ or $\pm (2^k-t)$. In the first case we may simply divide everything by 2 -- we are left with a collection $\C/2$ of $2^{k-1}+r$ residues modulo $2^k$ which avoid a sum of $2^{k-1}$ modulo $2^k$. By induction on $k$ this contains $r+1$ disjoint sums which are $0$ modulo $2^{k}$, and the corresponding disjoint sums in $\C$ are all $0$ modulo $2^{k+1}$.

So we are left in the case where there is an odd residue $t$ (modulo $2^{k+1}$) such that $\C$ contains $2^{k-1}+r$ residues which are either $\pm t$ or $\pm (2^k-t)$. Observe that having $+t$ and $+2^k-t$ is impossible as it gives a sum of $2^k$, and similarly for $-t$ and $-(2^k-t)$. 

Without loss of generality we assume that $\pm t$ occurs at least once. Then we can use type 2 compressions to obtain that all but at most one of the $2^{k-1}+r$ residues are $\pm t$. By scaling we can assume that $t=1$. We have thus reduced to the case where $\C$ contains at least $2^{k-1}+r-1\ge 2^{k-1}$ (as if $r=0$ then done) copies of $\pm 1$. Since we are done by induction on $r$ if we are able to perform any type 1 compressions, we deduce that $\C$ does not contain any residues $u$ with $|u|\in [2,2^{k-1}]$.

Therefore all remaining generators have absolute value in the range $[2^{k-1}+1,2^{k}-1]$. Note that if two members $u,v$ of $\C$ lie in the range $[(3/2)2^{k-1}+1,2^{k}-1]$ then we can do a type 3 compression to replace $u$ and $v$ with $u-2^k$ and $v-2^k$ respectively. Then since $u-2^k$ and $v-2^k$ have absolute value less than $2^{k-1}$ we can do a type 1 compression and we are done by induction unless $u-2^k=v-2^k=-1$, ie $u=v=2^k-1$. But after our type 2 compressions we had that all but at most one of the elements equal to $\pm 1$ or $\pm (2^k-1)$ were in fact $\pm 1$, so this cannot arise. Thus if we can find such $u$ and $v$ then we are done.

Similarly, if we find two members $u,v$ of $\C$ lie in the range $[2^{k}+1, 2^k + 2^{k-2}-1]$ then we can multiply by $-1$ to get two elements in the range $[(3/2)2^{k-1}+1,2^{k}-1]$. We are then done as above.

If we find $u\in [(3/2)2^{k-1}+1,2^{k}-1]$ and $v\in [2^{k}+1, 2^k + 2^{k-2}-1]$ then we must have a sum congruent to $2^k$ modulo $2^{k+1}$, since we have at least $2^{k-1}$ elements which are $\pm 1$.

We may therefore assume that $\C$ contains at most one element with absolute value in the range $[(3/2)2^{k-1}+1,2^{k}-1]$.

The last range to consider is the elements of $\C$ with absolute value in the range $[2^{k-1}+1,(3/2)2^{k-1}-1]$. Suppose we have elements $u$ and $v$ in $\C$ with absolute value in this range. Then order the elements of $\C$ as follows. Let $x_1=u$ and $x_2=v$. Then let $x_3,\dots,x_{2^{k-1}+2}=\pm 1$. Then take the remaining elements in any order. Let $\C_i=\{x_1,\dots,x_i\}$. Note that for $i\le 2^{k-1}+2$, we have
$$\C_i^*=[u-\beta,u+\alpha]\cup[-\beta,\alpha]\cup [u+v-\beta,u+v+\alpha]$$
where there are $\alpha$ copies of $+1$ and $\beta$ copies of $-1$ amongst $x_3,\dots,x_{2^{k-1}+2}$. By the absolute values of $u$ and $v$, we see that these 3 intervals are disjoint. So $|\C_{2^{k-1}+2}^*|\ge 3(2^{k-1}+1)$. 

Since adding each further element of $\C$ increases the size of the iterated sumset by at least 1, we get that 
$$|\C^*|\ge 3(2^{k-1}+1)+2^k+r-(2^{k-1}+2)\geq 2^{k+1}$$
which is impossible.

So we conclude that at most 1 element of $\C$ has absolute value in the range $[2^{k-1}+1,(3/2)2^{k-1}-1]$, and thus at most 2 elements of $\C$ have absolute value in the range $[2^{k-1}+1, 2^{k}-1]$. All other members of $\C$ must be equal to $\pm 1$ or we can do type 1 compressions.

This means that we in fact have at least $2^k+r-2\ge 2^k-1$ elements of $\C$ which are $\pm 1$. But this means that we can do type 1 compression if \emph{any} element of $\C$ is not equal to $\pm 1$. Since at most $2^k-1$ of these can be $+1$, we get at least $2^k+r-(2^k-1)=r+1$ disjoint pairs of $\{\pm 1\}$, giving us $r+1$ disjoint sums equalling 0. This finishes the proof of Lemma~\ref{lem:keylemma}.
\end{proof}


\section{Families minimising the number of $2$-cubes}\label{sec:samsud}

Our goal in this section is to prove Theorem~\ref{thm:kleitmananal}. Given $n$, let $S\subset \ZZ$ be a set of size $M=2^{n-1}+1$. Recall that for $M\leq 2^{n-1}$ the centred family of size $M$ is a subset of $L_1$ and hence contains no Schur triples, our $M=2^{n-1}+1$ case is the first non-trivial case of Conjecture~\ref{conj:samsud}. Our goal is to prove that any family of size $2^{n-1}+1$ contains at least $3\cdot 2^{n-1}$ Schur triples, which is the number of Schur triples in the centred set $S=L_1\cup\{2\}$. We will use for all $i$ the notation $S_i = S\cap L_i$ where $L_i$ is the $i$'th layer as before, and similarly $S_{i+} = S\cap \left(L_{i+1}\cup L_{i+2}\cup \ldots\cup L_{n+1}\right)=S_{i+1}\cup S_{i+2}\cup\ldots\cup S_{n+1}$. Note that whenever the numbers $x,y,z$ form  a Schur triple, they cannot be in three different layers, nor all in the same layer.

Denote by $C(a,b,c)$ the set of Schur triples $(x,y,z)\in S^3$ with $x\in S_a$, $y\in S_b$ and $z\in S_c$. Similarly, let e.g.~$C(a+,b,c):=\left\{(x,y,z)\in S^3: ~  x+y=z, ~ x\in S_{a+}, y\in S_b, z\in S_c\right\}$. We will use the following elementary observation on the number of edges in an induced subgraph of a regular graph.

\begin{claim}\label{claim:regugraph}
Let $G$ be a directed graph, with bidirectional edges (i.e.~$x\rightarrow y$ and $y\rightarrow x$) and loops allowed. Suppose every vertex has out-degree and in-degree equal to $k$, let $N:=|V(G)|$ and let $R\subset V(G)$ be a set of size $|R|=m$. Then the number of edges in the induced subgraph $G[R]$ satisfies
$$E(G[R])\geq \max\{m(k-N+m),k(2m-N),0\}.$$
\end{claim}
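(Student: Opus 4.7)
The plan is to obtain each of the three lower bounds inside the maximum by a short double-counting argument, and then take the pointwise maximum. The bound $0$ is trivial since $E(G[R])$ counts nonnegative quantities, so the real work is in proving the other two inequalities.

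For the bound $E(G[R])\geq m(k-N+m)$, I would work vertex by vertex. Fix any $v\in R$. By hypothesis $v$ has out-degree exactly $k$, and its out-neighbours (counted with loops) lie in $V(G)$, which has only $N-m$ vertices outside $R$. Hence at most $N-m$ of the $k$ out-edges from $v$ leave $R$, so at least $k-(N-m)=k-N+m$ of them stay inside $R$. Summing this lower bound over the $m$ vertices in $R$ gives exactly $m(k-N+m)$ edges of $G[R]$ (each such edge is counted once, from its tail).

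For the bound $E(G[R])\geq k(2m-N)$, I would instead count the total number of out-edges from $R$ globally. That total equals $mk$ because each of the $m$ vertices in $R$ has out-degree $k$. Every such edge either lies in $G[R]$ or is an edge from $R$ to $V(G)\setminus R$, and the latter kind contribute to the in-degrees of the $N-m$ vertices outside $R$, whose total in-degree is $(N-m)k$. Consequently the number of edges from $R$ to its complement is at most $(N-m)k$, and rearranging gives $E(G[R])\geq mk-(N-m)k=k(2m-N)$.

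Combining the three lower bounds yields the claim. There is no real obstacle here: the statement is essentially two instances of the same counting identity, applied once per vertex and once globally, and the bounds become useful in complementary regimes (the first when $m$ is close to $N$, the second when $k$ is small compared to $N$). I would write the proof in two short paragraphs corresponding to the two nontrivial bounds, and note that the $0$ bound requires no argument.
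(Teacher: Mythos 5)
Your argument is correct and matches the paper's proof essentially verbatim: the first bound comes from counting, for each vertex of $R$, how many of its $k$ out-edges must stay inside $R$, and the second from comparing the $mk$ out-edges of $R$ with the total in-degree $(N-m)k$ available outside $R$. Nothing is missing.
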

\begin{proof}
Since every vertex in $R$ has $k$ edges leaving it, and at most $N-m$ of these end in vertices not in $R$, it follows that at least $k-N+m$ point to vertices in $R$, and the first part follows. The middle inequality follows from the observation that there are $km$ edges starting at vertices of $R$ and there are $(N-m)k$ edges ending at vertices not in $R$. The third part holds since the number of edges cannot be negative.
\end{proof}

\begin{claim}\label{claim:firstsam}
For any integer $a$ with $1\leq a\leq n$, we have $$|C{(a,a,a+)}|\geq \max\left\{ |S_a|\left(|S_{a+}| - |L_a|+|S_a|\right),|S_{a+}|(2|S_a|-|L_a|),0\right\}.$$
\end{claim}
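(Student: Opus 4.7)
The plan is to apply Claim~\ref{claim:regugraph} directly to a suitably chosen regular graph on $L_a$. The key divisibility observation is that if $x, y \in L_a$, then $x \equiv y \equiv 2^{a-1} \pmod{2^a}$, so $x+y$ is divisible by $2^a$ and therefore $x+y \in L_{a+1} \cup \ldots \cup L_{n+1}$. Equivalently, for any $z$ in this union and any $x \in L_a$, the element $z-x$ lies in $L_a$ (since $z - x \equiv -2^{a-1} \equiv 2^{a-1} \pmod{2^a}$). A small bookkeeping check gives $|L_{a+1}| + \ldots + |L_{n+1}| = 2^{n-a-1} + \ldots + 1 + 1 = 2^{n-a} = |L_a|$, so that the shift $y \mapsto x+y$ is in fact a bijection from $L_a$ onto $L_{a+1}\cup\ldots\cup L_{n+1}$.

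Equipped with this, I would define the directed graph $G$ on vertex set $L_a$ (with bidirectional edges and loops allowed) by placing an edge $x \to y$ whenever $x+y \in S_{a+}$. For each fixed $x \in L_a$, the map $y \mapsto x+y$ is a bijection $L_a \to L_{a+1}\cup\ldots\cup L_{n+1}$, so the number of $y \in L_a$ with $x+y \in S_{a+}$ is exactly $|S_{a+}|$; by symmetry the in-degree is the same. Hence $G$ is $|S_{a+}|$-regular on $N = |L_a|$ vertices.

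Taking $R = S_a$, the edges of the induced subgraph $G[S_a]$ are precisely the ordered pairs $(x,y) \in S_a \times S_a$ with $x+y \in S_{a+}$, i.e.\ the Schur triples counted by $|C(a,a,a+)|$. Plugging $k = |S_{a+}|$, $N = |L_a|$ and $m = |S_a|$ into Claim~\ref{claim:regugraph} yields the three quantities
$$m(k-N+m) = |S_a|\bigl(|S_{a+}| - |L_a| + |S_a|\bigr), \quad k(2m-N) = |S_{a+}|\bigl(2|S_a| - |L_a|\bigr), \quad 0,$$
and hence the stated bound. There is no real obstacle here: the only non-cosmetic step is verifying that the ``sums from $L_a$'' exactly fill $L_{a+1}\cup\ldots\cup L_{n+1}$ via the bijection above, which is the content that forces $G$ to be regular and lets Claim~\ref{claim:regugraph} apply verbatim.
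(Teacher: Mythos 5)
Your proof is correct and essentially identical to the paper's: both define the digraph on $L_a$ with an edge $x\to y$ exactly when $x+y\in S_{a+}$ (the paper phrases this as a union of $1$-regular graphs $G_z$ over $z\in S_{a+}$), verify $|S_{a+}|$-regularity, and then invoke Claim~\ref{claim:regugraph} with $R=S_a$. Your extra bijection observation $y\mapsto x+y$ is simply a slightly more explicit justification of the same regularity count.
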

\begin{proof}
Let $z\in S_{a+}$. Create a directed graph $G_z$ on vertex set $L_a$ by adding the edge $x\rightarrow y$ if $x+y=z$. Every vertex in this graph has indegree and outdegree one (loops possible) and every edge corresponds to a Schur triple in $C(a,a,a+)$. Let $$G=\bigcup_{z\in S_{a+}} G_z,$$ so that $G$ is a directed graph with every vertex having indegree and outdegree exactly $|S_{a+}|$, and every directed edge present at most once, and the bound follows from Claim~\ref{claim:regugraph}. When $S=T$ we either have $|S_{a+}|=0$ or $S_a=L_a$, with equality in both cases.
\end{proof}

\begin{claim}\label{claim:secondsam}
For any integer $a$ with $1\leq a\leq n$, we have $$|C{(a+,a,a)}|\geq \max\left\{ |S_a|\left(|S_{a+}| - |L_a|+|S_a|\right),|S_{a+}|(2|S_a|-|L_a|),0\right\}.$$
\end{claim}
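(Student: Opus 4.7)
The plan is to mirror the argument for Claim~\ref{claim:firstsam} almost verbatim, simply interchanging the roles of the summand and the sum. Rather than fixing the element of $S_{a+}$ as the target of a Schur triple, I would fix it as one of the summands, and then count the remaining $(y,z)$ pairs via a regular auxiliary digraph on $L_a$.

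First I would verify that for any $x \in L_{a+1} \cup \cdots \cup L_{n+1}$ and any $y \in L_a$, the element $x+y$ lies again in $L_a$: since $x$ is divisible by $2^a$ while $y$ is exactly divisible by $2^{a-1}$, the sum $x+y$ is exactly divisible by $2^{a-1}$. Consequently, for every fixed $x \in S_{a+}$, the map $y \mapsto x+y$ is a bijection from $L_a$ to $L_a$. Build the directed graph $G_x$ on vertex set $L_a$ by placing the edge $y \to z$ whenever $x+y = z$; then $G_x$ is a permutation, i.e.\ every vertex has in-degree and out-degree exactly one, with loops allowed.

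Next I would form $G := \bigcup_{x \in S_{a+}} G_x$. Because $x$ is uniquely determined by an edge $y \to z$ as $x = z - y$, different values of $x$ contribute disjoint edge sets, so $G$ is a simple directed graph on $L_a$ in which every vertex has in-degree and out-degree exactly $|S_{a+}|$. Moreover, each edge $y \to z$ of $G$ with both endpoints in $S_a$ corresponds bijectively to a Schur triple $(x,y,z) \in C(a+,a,a)$, so $|C(a+,a,a)| = E(G[S_a])$.

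Finally, I would invoke Claim~\ref{claim:regugraph} with $N = |L_a|$, $R = S_a$, $m = |S_a|$ and $k = |S_{a+}|$ to obtain the three lower bounds claimed. There is no real obstacle: the only subtle point is the two-adic valuation check ensuring $G_x$ is a permutation of $L_a$, and once that is in place the proof is a direct transcription of Claim~\ref{claim:firstsam}.
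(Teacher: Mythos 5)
Your proposal is correct and takes essentially the same approach as the paper: fix the element of $S_{a+}$ playing the role of the first coordinate, build the auxiliary $|S_{a+}|$-regular digraph on $L_a$, and apply Claim~\ref{claim:regugraph}. Your write-up is slightly more careful than the paper's (you verify explicitly that $x+y\in L_a$ via the $2$-adic valuation and note that distinct choices of $x$ give edge-disjoint permutation digraphs), but the underlying argument is identical.
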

\begin{proof}
The proof is very similar to the proof of Claim~\ref{claim:firstsam}. Fix an element  $y\in S_{a+}$ and create a directed graph $G_y$ on vertex set $L_a$ by adding the edge $x\rightarrow z$ if $y=z-x$. Then in $G=\cup_{y\in S_{a+}} G_y$ every vertex has indegree and outdegree $|S_{a+}|$, and the rest of the proof is exactly as in Claim~\ref{claim:firstsam}. When $S=T$ we have equality as before.
\end{proof}

\begin{proof}[Proof of Theorem~\ref{thm:kleitmananal}]
Recall that $\mathrm{ST}(S)$ denotes the number of Schur triples in $S$. By Claims~\ref{claim:firstsam} and~\ref{claim:secondsam} we have
\begin{equation*}
\begin{split}
\mathrm{ST}(S)&=\sum_{a=1}^n C(a+,a,a) + C(a,a+,a) + C(a,a,a+) \\
&\geq 3 \max\left\{ |S_a|\left(|S_{a+}| - |L_a|+|S_a|\right),|S_{a+}|(2|S_a|-|L_a|),0\right\}.
\end{split}
\end{equation*}
It suffices to show that amongst sets of size $M$ the function $$f(S)=3\sum_{a=1}^n \max\left\{ |S_a|\left(|S_{a+}| - |L_a|+|S_a|\right),|S_{a+}|(2|S_a|-|L_a|),0\right\}$$ is never less than $3|L_1|$.

For every element $x\in\ZZ$, define $g(x)$ to be the integer satisfying $x\in L_{g(x)}$. Observe that if there exists an integer $b\geq 1$ such that $0<|S_{b+}|\leq |L_b\setminus S_b|$ then we can replace all of $S_{b+}$ by arbitrary elements of $L_b\setminus S_b$. This does not increase $f(S)$ and decreases $\sum_{x\in S} g(x)$. Setting $B:=\max\{i: 0<|S_i|\}$, the highest non-empty layer, we can assume that for all $b<B$ we have $|L_b\setminus S_b|<|S_{b+}|$. So we have the strict inequalities

\begin{equation*}
\begin{split}
|L_1|&<|S_1| + |S_2| + \ldots + |S_B| = |L_1|+1\\
|L_2|&< |S_2| + |S_3| + \ldots + |S_B| \\
|L_3|&< |S_3| + |S_4| + \ldots + |S_B|\\
\ldots&\\
|L_{B-1}|&< |S_{B-1}| + |S_B| .
\end{split}
\end{equation*}
Hence we have
\begin{equation*}
\begin{split}
\frac{f(S)}{3}&\geq \left(\sum_{a=1}^{B-2} |S_a|(|S_{a}| + |S_{a+}| - |L_a|)\right) ~ + ~ |S_B|(2|S_{B-1}| - |L_{B-1}|)\\
&\geq \left(\sum_{a=1}^{B-2} |S_a|\right) ~ + ~  |S_B|(2|S_{B-1}| - |L_{B-1}|)
\end{split}
\end{equation*}
Now note that subject to the constraints $0\leq |S_{B-1}|\leq |L_{B-1}|$, $0\leq |S_{B}|\leq |L_B|$ and $|S_{B-1}|+|S_{B}|\geq |L_{B-1}|+1$, we have the inequality $|S_{B}|(2|S_{B-1}|-|L_{B-1}|) \geq (|S_{B-1}| + |S_B| - |L_{B-1}|)|L_{B-1}|$. So we have
\begin{equation*}
\begin{split}
\frac{f(S)}{3}&\geq \left(\sum_{a=1}^{B-2} |S_a|\right) ~ + ~ (|S_{B-1}| + |S_B| - |L_{B-1}|)|L_{B-1}|\\
&\geq |L_1|+1 -|S_{B-1}| - |S_{B}| + (|S_{B-1}| + |S_B| - |L_{B-1}|)|L_{B-1}|\\
&\geq |L_1| + 1 - (|L_{B-1}| + 1) + |L_{B-1}| = |L_1|,
\end{split}
\end{equation*}
where in the last inequality we used that $|S_{B-1}| + |S_B|>|L_{B-1}|$. Hence $f(S)\geq 3|L_1|$ and this completes the proof of Theorem~\ref{thm:kleitmananal}.
\end{proof}


\section{Conclusion and open questions}\label{sec:outro}

The main conjectures raised in the present paper are the analog of Samotij's theorem in $\ZZ$ (Conjecture~\ref{conj:samanal}) and that the constructions $\C_d$ are best possible (Conjecture~\ref{conj:cdbestalways}). The first open case of Conjecture~\ref{conj:cdbestalways} is the $d=3$ case, which we restate here.

\begin{conjecture}\label{conj:conc1}
The largest $3$-cube-free family in $\ZZ$ has size $(5/8)\cdot 2^n$. 
\end{conjecture}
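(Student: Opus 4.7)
The natural attack on Conjecture~\ref{conj:conc1} is induction on $n$, splitting $A$ into its odd and even parts and handling each by a combination of induction and a counting argument. The base case $n=3$ reduces to a finite check that every six-element subset of $\Z_8$ contains a $3$-cube; the extremal example is $\C_3=L_1\cup L_3=\{1,3,4,5,7\}$.

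For the inductive step, assume the conjecture for $n-1$, and let $A\subseteq\ZZ$ be $3$-cube-free with $|A|>5\cdot 2^{n-3}$. Write $A=O\cup E$ where $O=A\cap L_1$ and $E=A\cap 2\ZZ$. The set $E/2\subseteq\Z_{2^{n-1}}$ is itself $3$-cube-free, since any $3$-cube in $E/2$ lifts, by multiplication by $2$, to a $3$-cube in $E\subseteq A$. Induction therefore yields $|E|\leq 5\cdot 2^{n-4}$, forcing $|O|>5\cdot 2^{n-3}-5\cdot 2^{n-4}=5\cdot 2^{n-4}$; that is, $O$ has density strictly greater than $5/8$ in $L_1$. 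In particular $|E|\geq 1$, since $|O|\leq|L_1|=2^{n-1}<5\cdot 2^{n-3}$.

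Given this, I would search for a $3$-cube with exactly one even generator: pick $c\in E$, and look for $a,b\in O$ with $a+c,b+c\in O$ and $a+b,a+b+c\in E$. Setting $O_c:=O\cap(O-c)\subseteq L_1$, inclusion--exclusion in $L_1$ gives $|O_c|\geq 2|O|-|L_1|>|L_1|/4$ for every $c\in E$, reducing the problem to finding $a,b\in O_c$ with $a+b\in E\cap(E-c)$. The natural indicator count is
$$N:=\sum_{a,b,c\in\ZZ}\mathbf{1}_O(a)\,\mathbf{1}_O(b)\,\mathbf{1}_E(c)\,\mathbf{1}_O(a+c)\,\mathbf{1}_O(b+c)\,\mathbf{1}_E(a+b)\,\mathbf{1}_E(a+b+c),$$
and a Fourier expansion on $\ZZ$ yields a main term of order $|O|^4|E|^3/2^{4n}$ that is positive once $|O|,|E|\geq 1$. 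The technical task is to show that the lower-order Fourier contributions cannot cancel this main term throughout the density regime we care about. In the extreme case where $|E|$ is very small, one instead uses cubes with three odd generators: the conditions become $a,b,c,a+b+c\in O$ and $\{a+b,a+c,b+c\}\subseteq E$, and when $|O|$ is close to $|L_1|$ the sumset $O+O$ covers almost all of $2\ZZ$, so the three prescribed even positions can be hit.

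The main obstacle I foresee is the balanced regime where $|E|$ is close to its inductive maximum $5\cdot 2^{n-4}$ and $|O|$ is correspondingly just above $5\cdot 2^{n-4}$, so that neither of the two counts has a comfortable margin. Surmounting this will, I expect, require a \emph{stability} strengthening of the conjecture for $n-1$: a $3$-cube-free subset of $\Z_{2^{n-1}}$ of size close to $5\cdot 2^{n-4}$ must be close in symmetric difference to $L_1\cup L_3$. With such a stability statement, $E$ would be pinned down to essentially $L_2\cup L_4$, leaving only a concrete residue analysis modulo $8$ to produce a $3$-cube directly from the density of $O$ in $L_1$. Establishing this stability version in tandem with the conjecture itself, together with the requisite Fourier error bounds, appears to be the principal hurdle.
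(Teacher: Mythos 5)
This statement is an open conjecture in the paper, not a theorem: the authors present it as the first unresolved case of Conjecture~\ref{conj:cdbestalways}, report only a Gurobi check of the stronger Conjecture~\ref{conj:conc2} for $n\leq 7$, and leave the general case open. There is therefore no proof in the paper for your attempt to be compared against, and what you have written is --- as you yourself flag --- a programme with gaps rather than a proof.

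The induction scaffolding is fine: $E/2\subseteq\Z_{2^{n-1}}$ really is $3$-cube-free, and the base case $n=3$ checks out. But the Fourier-counting step you defer as a ``technical task'' is not merely delicate; it fails exactly where it matters. At the conjectured extremal set $A=L_1\cup L_3$ we have $O=L_1$, $E=L_3$; your main term $|O|^4|E|^3/2^{4n}$ is positive yet the true count $N$ is zero, so the Fourier error exactly cancels the main term rather than being lower-order, and this persists throughout a neighbourhood of the extremal set. The three-odd-generator fallback does not rescue the small-$|E|$ regime either: if $E\subseteq L_3$ then $a+b\equiv a+c\equiv b+c\equiv 4\pmod 8$ forces $a+b+c\equiv 2\pmod 4$, impossible for odd $a,b,c$, so no such cube exists no matter how much of $L_1$ is present. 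Your claim that $O+O$ covering almost all of $2\ZZ$ suffices overlooks that $a+b$, $a+c$, $b+c$ are correlated rather than independent targets. Finally, the stability statement you propose to carry through the induction is itself open and at least as hard as the conjecture, so invoking it does not close the argument.
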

Recall that $L_1\cup L_3$ is $3$-cube-free, so this conjecture, if true, is sharp. Using Gurobi~\cite{gurobi} we could check that for $n\leq 7$ the following stronger conjecture is also true.
\begin{conjecture}\label{conj:conc2}
If $S\subset \ZZ$ is a set of size $|S|>(5/8)\cdot 2^n$ then there exist $x,y$ such that $\Si\{x,x,x\}\subset A$ or $\Si\{x,3x,y\}\subset A$.
\end{conjecture}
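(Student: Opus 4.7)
My plan is to follow the template of the proof of Theorem~\ref{thm:stronger}: first locate a short arithmetic-progression-like ``seed'' inside $S$, and then extend it to a full $3$-cube by intersecting $S$ with a few shifts. A seed of the form $\{x,2x,3x\}\subseteq S$ immediately gives the first option $\Sigma^*\{x,x,x\}\subseteq S$, while a seed $\{x,3x,4x\}\subseteq S$ combined with any $y\in S\cap(S-x)\cap(S-3x)\cap(S-4x)$ yields $\Sigma^*\{x,3x,y\}\subseteq S$, the second option. So the proof splits naturally into a seed-finding step and an extension step.

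For the seed-finding step, I would try to prove the following analogue of Claim~\ref{claim:ap}: if $|S|>(5/8)\cdot 2^n$ then there exists an $x$ such that either $\{x,2x,3x\}\subseteq S$ or $\{x,3x,4x\}\subseteq S$. The natural tool here is the layer structure: for odd $x$ we have $x,3x\in L_1$ while $2x\in L_2$ and $4x\in L_3$, so both candidate seeds carve a ``column'' through $L_1\cup L_2\cup L_3$, which is exactly where the extremal example $L_1\cup L_3$ lives. Mimicking the proof of Claim~\ref{claim:ap}, I would double-count the columns $\{x,2x,3x\}$ and $\{x,3x,4x\}$ as $x$ ranges over each $L_a$ in turn, and combine with a partition of $\ZZ$ into layer-blocks so that no column escapes notice; if neither seed ever lies in $S$, every column is hit by the complement, which must push $|S|$ down to $(5/8)\cdot 2^n$.

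For the extension step, given a seed $\{x,3x,4x\}\subseteq S$ with $x$ odd, we need $y\in S\cap(S-x)\cap(S-3x)\cap(S-4x)$. The naive union bound would need $|S|>(3/4)\cdot 2^n$, which is strictly more than what we have, and this is the main obstacle. I would try to exploit the additional structure that is forced on $S$ by the assumption that the first option fails for every $x$: this failure should push $S$ close to a union of two layers of the form $L_1\cup L_3$, and for any set very close to that extremal example the four shifts by $0,x,3x,4x$ act in a highly correlated way on $L_1$ and $L_3$ (since $z\mapsto z+x$ permutes each $L_i$ when $x$ is odd). A careful accounting of $|S\cap L_1|$ and $|S\cap L_3|$, together with the inclusion--exclusion for the four translates, should recover the constant loss.

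The hardest part by far is matching the sharp constant $5/8$. A promising safer route is to \emph{average} the size of the fourfold intersection over all valid seeds $x$ rather than trying to pick a single good $y$, combined with a stability/density-increment step showing that any $S$ avoiding both configurations in Conjecture~\ref{conj:conc2} must be essentially contained in the union $L_1\cup L_3$ of two layers. I would not be surprised if, in the end, the argument has to split into cases according to which of $|S\cap L_1|$, $|S\cap L_2|$ and $|S\cap L_3|$ carries most of the mass, in the spirit of the case analysis at the end of the proof of Theorem~\ref{thm:kleitmananal}; the Gurobi verification for $n\le 7$ cited in the paper gives some confidence that such a finite case analysis closes the gap.
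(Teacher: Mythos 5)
The statement you are attacking is Conjecture~\ref{conj:conc2}, which the paper explicitly leaves open: the authors report only a computer verification (via Gurobi) for $n\le 7$ and give no proof. So there is no argument in the paper for your proposal to be measured against, and your write-up itself stops short of a proof --- you correctly diagnose that the seed-plus-translate template from Theorem~\ref{thm:stronger} does not close on its own, and you then leave the closing step as a programme rather than an argument.

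To be concrete about where the gap sits: the seed-finding step you outline is not the bottleneck. For any odd $x$ we have $x,3x\in L_1$ and $4x\in L_3$, so the extremal set $L_1\cup L_3$, of density exactly $5/8$, already contains \emph{every} seed $\{x,3x,4x\}$ with $x$ odd; any $S$ with $|S|>(5/8)2^n$ meets $L_1$ in more than half of $L_1$, so such a seed exists essentially for free. The entire content is the extension step. A naive union bound needs $|S|>(3/4)2^n$, and one can check directly that when $S=L_1\cup L_3$ the four translates $S,\,S-x,\,S-3x,\,S-4x$ have empty common intersection for every odd $x$: if $y$ is odd then $y+x$ and $y+3x$ differ by $2x\not\equiv 0\pmod 8$ so cannot both lie in $L_3$, while if $y\equiv 4\pmod 8$ then $y+4x\equiv 0\pmod 8$ falls outside $L_1\cup L_3$. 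So near the extremal example the ``correlated behaviour'' of the translates actually works against you, and the stability/density-increment step that is supposed to recover the sharp $5/8$ constant is precisely the missing proof. Until that is supplied, this remains exactly what the paper says it is: a conjecture.
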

If true, Conjecture~\ref{conj:conc2} might be easier to prove than Conjecture~\ref{conj:conc1}. Our hope is that an insightful proof of Conjecture~\ref{conj:conc1} may quickly lead to a full proof of Conjecture~\ref{conj:cdbestalways}. 

Following~\cite{alonconc,erdosconc,guy}, a natural complementary problem is to determine for all $M,n$ (and $k$) the set $S\subset \ZZ$ of size $|S|=M$ with the largest number of Schur triples (or $k$-chains). In the Boolean lattice for a wide range of $M$ the constructions with the largest number of comparable pairs are essentially towers of cubes. In $\ZZ$, say that a set $S\subset \ZZ$ is \emph{anti-centred} if  there exists an $i\in[n+1]$ such that for all $j$ with $n\geq j>i$ we have $L_j\subseteq S$, and for all $j$ with $i>j\geq 1$ we have $L_j\cap S=\emptyset$. It seems plausible that anti-centred families maximize the number of Schur triples and perhaps even $k$-cubes. Note that if $M=2^{n-\ell}$ for some $\ell$ then an anti-centred family is the union of the $n-\ell+1$ smallest layers of $\ZZ$ and hence contains $M^2$ Schur triples and $M^{k}$ distinct $k$-cubes, both of which are optimal.

Another related problem, following~\cite{satu2,satu}, is to determine the smallest maximal $k$-cube free set in $\ZZ$. That is, the smallest $S\subset \ZZ$ that is $k$-cube free, but the additon of any new element to $S$ makes it not $k$-cube-free. Here we do not even have a good guess about what the extremal families could be.

\section*{Acknowledgements}
We are very grateful to Anton Bernshteyn for many stimulating conversations throughout the project.

\end{document}